\documentclass[11pt]{article}
%Created on 4/10/07.
\pdfminorversion=4

\usepackage{amsthm,amsmath,amssymb}
\usepackage{natbib}
\usepackage{multirow}
\usepackage[pdftex]{graphicx}
\usepackage{subfigure}
\usepackage{makecell}
\usepackage{booktabs}
\usepackage{array}
\usepackage{fullpage}
\usepackage{url}
\usepackage{algorithm}
\usepackage{algorithmic}
\usepackage{bm}
\usepackage{smile}
\usepackage{mathtools}
\usepackage{wrapfig}
\usepackage{lipsum}
\usepackage{mathrsfs}
\usepackage{dsfont}
\usepackage{esint}

\usepackage{natbib}
\usepackage{multirow}

\usepackage{appendix}

\newcolumntype {Q}{>{$\displaystyle}l<{$}}
\newcolumntype {A}{>{$}c <{$}}

\def\tr{\mathop{\text{tr}}\kern.2ex}

\def\P{{\mathbb P}}
\def\E{{\mathbb E}}

\def\N{{\mathbb N}}
\def\Z{{\mathbb Z}}

\def\sign{\mathrm{sign}}

\long\def\comment#1{}

\def\tr{\mathop{\text{Tr}}}

\def\cS{{\mathcal{S}}}

\providecommand{\norm}[1]{\vvvert#1\vvvert}

\newcommand{\bel}{\begin{eqnarray}\label}
\newcommand{\eel}{\end{eqnarray}}
\newcommand{\bes}{\begin{eqnarray*}}
\newcommand{\ees}{\end{eqnarray*}}

\def\reals{{\mathbb{R}}}

\def\T{{\sf T}}

%\newtheorem{assumption}{Assumption}
%\newtheorem{condition}{Condition}

%\allowdisplaybreaks
%\graphicspath{{./figs/}}

\usepackage[usenames,dvipsnames,svgnames,table]{xcolor}
\usepackage[colorlinks=true,
            linkcolor=blue,
            urlcolor=blue,
            citecolor=blue]{hyperref}
            
\numberwithin{equation}{section}
\numberwithin{theorem}{section}
\numberwithin{corollary}{section}
\numberwithin{asmp}{section}
\numberwithin{definition}{section}  

%\addtolength{\textwidth}{1in} \addtolength{\oddsidemargin}{-0.5in}
%\addtolength{\textheight}{1in} \addtolength{\topmargin}{-0.62in}

\begin{document}

\setlength{\abovedisplayskip}{5pt}
\setlength{\belowdisplayskip}{5pt}
\setlength{\abovedisplayshortskip}{5pt}
\setlength{\belowdisplayshortskip}{5pt}

\title{\LARGE An Exponential Inequality for U-Statistics under Mixing Conditions}

\author{Fang Han%\thanks{Department of Biostatistics, Johns Hopkins University, Baltimore, MD 21205, USA; e-mail: {\tt fhan@jhu.edu}}~
\thanks{Department of Statistics, University of Washington, Seattle, WA 98195, USA.}}

\date{}

\maketitle

\begin{abstract}
The family of U-statistics plays a fundamental role in statistics. %It includes widely used statistics such as Pearson sample covariance coefficient, Kendall's tau, Spearman's rho, Wilcoxin signed rank statistic, and Gini's mean difference. %However, despite its popularity, there is surprisingly little work theoretically understanding the performance of U-statistics in time series analysis. 
This paper proves a novel exponential inequality for U-statistics under the time series setting. %under a certain mixing condition. %The results generalize the existing ones in the literature of time series analysis, which are mostly focused on sample-mean type statistics. 
Explicit mixing conditions are given for guaranteeing fast convergence, the bound proves to be analogous to the one under independence, and extension to non-stationary time series is straightforward. The proof relies on a novel decomposition of U-statistics via exploiting the temporal correlatedness structure. Such results are of %fundamental 
interest in many fields where high dimensional time series data are present. In particular, applications to high dimensional time series inference are discussed. 
\end{abstract}

{\bf Keywords:} U-statistics; mixing conditions; exponential inequality; high dimensional time series inference.

\section{Introduction}

Consider $X_1,\ldots, X_T$ to be $T$ random variables of identical distribution in a measurable space $(\cX,\cB_{\cX})$. Given a symmetric kernel function $h(\cdot):\cX^r \rightarrow \reals$, the U-statistic $U_r(X_1,\ldots, X_T)$ of order $r$ is defined as:
\begin{align}\label{eq:U-statistics}
U_r(X_1,\ldots, X_T):=\binom{T}{r}^{-1}\sum_{1\leq t_1<\cdots<t_r\leq T}h(X_{t_1},\ldots,X_{t_r}). 
\end{align}
The statistic $U_r(X_1,\ldots,X_T)$ aims to estimate 
\[
\theta(h):= \int h(X_1,\ldots,X_r)d\P(X_1)\cdots d\P(X_r). 
\]
When $r=1$, the form \eqref{eq:U-statistics} reduces to a sample-mean-type estimator. 

Under the data independence assumption, $U_r(\cdot)$ is an unbiased estimator of $\theta(h)$. \cite{hoeffding1948class} and \cite{hoeffding1963probability} further proved its asymptotic normality and characterized its tail behavior via the celebrated Bernstein and Hoeffding's inequalities for U-statistics. More recent results can be found in \cite{arcones1993limit}. 

However, in many real applications, data points are temporally correlated and naturally form a time series. Examples include stock market data, functional magnetic resonance image (fMRI) data, and time course genomic data. For such data, data independence is too strong an assumption to hold. Instead, for analysis, researchers tend to characterize the temporal dependence strength. For this, a common strategy is to assume certain types of mixing conditions. %. They are exploited to characterize the temporal dependence strength without 
They do not pose any explicit structure on the time series and hence could be employed in a variety of settings \citep{bradley2005basic}. The mixing coefficients corresponding to different time series models (e.g., autoregressive (AR), moving average (MA), autoregressive-moving-average (ARMA), and copula-based models) have been calculated. We refer the readers to \cite{liebscher2005towards} and \cite{beare2010copulas} for more detailed discussions in this track. 

%The asymptotical normality behavior of U-statistics under different mixing conditions has been addressed in \cite{yoshihara1976limiting}. However, in high dimensional data analysis, the Bernstein-type inequality results are usually more desired than the simple asymptotic-normality-type ones. This is for sharply characterizing the impact of dimensionality on the estimators' performance. 

%The derivation of Bernstein-type inequality requires characterizing higher moments of the statistics, and those statistics satisfying asymptotical normality are not necessary to have exponential concentration to the mean. 
This paper aims to prove an exponential inequality for U-statistics under mixing conditions. Under $\alpha$- and $\beta$-mixing conditions \citep{bradley2005basic}, an emerging literature concerns characterizing the tail behaviors of sample-mean-type estimators. Related literature is as follows.
\begin{itemize}
\item
Under an $\alpha$-mixing condition, \cite{modha1996minimum}, \cite{merlevede2009bernstein}, and \cite{merlevede2011bernstein}  proved Bernstein-type inequalities for the summation of a time series of bounded or subgaussian random variables.
\item
Under a $\beta$-mixing condition, \cite{banna2015bernstein} proved a matrix Bernstein-type inequality for the summation of a matrix-valued time series of bounded singular values. 
%\item 
%Under a $\phi$-mixing condition, \cite{kontorovich2008concentration} prove a McDiarmid-type inequality for statistics satisfying a stringent Lipchitz condition regarding the Hamming distance.  
\end{itemize}

%This paper is focused on developing a Bernstein-type inequality for quantifying the estimation error $U_r(X_1,\ldots, X_T)-\theta(h)$ under mixing conditions. %In the sequel, assume $h$ is a bounded kernel and $r$ is fixed. 
%For example, under $\alpha$-mixing condition, 
Consider two special cases. Assume either $X_1,\ldots, X_T$ are mutually independent or the kernel function has order $r=1$. The following propositions provide tail bounds for $U_r(X_1,\ldots,X_T)-\theta(h)$. 

\begin{proposition}[\cite{hoeffding1963probability}]\label{prop:1} Assume $X_1,\ldots,X_T$ mutually independent, and there exists an absolute constant $M>0$ such that $|h(\cdot)|\leq M$. Then there exists an absolute constant $C_0$ such that, for any $x\geq 0$,
\[
\P(|U_r(X_1,\ldots,X_T)-\theta(h)|\geq x) \leq 2\exp\Big(-\frac{C_0Tx^2}{M^2+Mx}  \Big).
\]
\end{proposition}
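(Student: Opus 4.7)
The plan is to reproduce Hoeffding's classical averaging trick, which reduces the U-statistic to an average of sums of independent bounded random variables, and then invoke the standard Bernstein inequality.

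First I would set $m := \lfloor T/r \rfloor$ and, for each permutation $\pi$ of $\{1,\ldots,T\}$, define the block average
\[
W_\pi := \frac{1}{m}\sum_{j=1}^{m} h(X_{\pi((j-1)r+1)}, \ldots, X_{\pi(jr)}).
\]
The key identity (Hoeffding 1963) is that
\[
U_r(X_1,\ldots,X_T) = \frac{1}{T!}\sum_{\pi} W_\pi,
\]
which follows by counting how often each ordered $r$-tuple of distinct indices appears when one runs over all permutations of $\{1,\ldots,T\}$ and extracts the first $m$ disjoint length-$r$ blocks. This expresses the U-statistic as a convex combination over $\pi$ of quantities, each of which is an average of $m$ \emph{independent} evaluations of $h$ (since the indices within different blocks of a fixed $\pi$ are disjoint and the $X_t$ are mutually independent).

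Next I would exploit convexity. For any $\lambda\in\reals$,
\[
\exp\bigl(\lambda(U_r-\theta(h))\bigr)
= \exp\Bigl(\lambda\cdot \tfrac{1}{T!}\sum_\pi (W_\pi-\theta(h))\Bigr)
\leq \frac{1}{T!}\sum_\pi \exp\bigl(\lambda(W_\pi-\theta(h))\bigr),
\]
by Jensen's inequality applied to $t\mapsto e^{\lambda t}$. Taking expectations and using that every $W_\pi$ has the same distribution as $W := \frac{1}{m}\sum_{j=1}^m h(X_{(j-1)r+1},\ldots,X_{jr})$, one obtains
\[
\E\exp\bigl(\lambda(U_r-\theta(h))\bigr) \leq \E\exp\bigl(\lambda(W-\theta(h))\bigr).
\]
Thus every MGF bound for $W$ transfers verbatim to $U_r$.

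Now $W$ is the sample mean of $m$ i.i.d.\ centered random variables $Y_j := h(X_{(j-1)r+1},\ldots,X_{jr})-\theta(h)$ with $|Y_j|\leq 2M$ and $\E Y_j^2 \leq M^2$. Applying the classical Bernstein inequality to $\sum_{j=1}^m Y_j = m(W-\theta(h))$ yields
\[
\P(|W-\theta(h)|\geq x) \leq 2\exp\Bigl(-\frac{c\, m x^2}{M^2+Mx}\Bigr)
\]
for some absolute constant $c>0$. Combining this with the MGF comparison above (via a Chernoff bound applied with the same optimizing $\lambda$) gives the same tail bound for $U_r$. Finally, since $m=\lfloor T/r\rfloor \geq T/(2r)$ and $r$ is fixed, we can absorb the factor $1/(2r)$ into the absolute constant, renaming it $C_0$, to conclude
\[
\P(|U_r-\theta(h)|\geq x)\leq 2\exp\Bigl(-\frac{C_0 T x^2}{M^2+Mx}\Bigr).
\]

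The main obstacle is essentially bookkeeping: one must verify the combinatorial identity $U_r = (T!)^{-1}\sum_\pi W_\pi$ cleanly, and then ensure that the Chernoff argument applied through the MGF comparison delivers both tails (for the lower tail one applies the same argument to $-h$). Once these pieces are in place the Bernstein bound for the block average is entirely standard.
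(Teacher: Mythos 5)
Your argument is correct and is precisely the classical Hoeffding (1963) decoupling-plus-Jensen route that the paper itself cites for Proposition \ref{prop:1} and sketches in its introduction as the identity $U_r(X_1,\ldots,X_T)=\frac{1}{|\cS_T|}\sum_{\sigma\in\cS_T}\frac{1}{\lfloor T/r\rfloor}\sum_{(s_1,\ldots,s_r)\in O(\sigma)}h(X_{s_1},\ldots,X_{s_r})$. Since the paper offers no independent proof beyond this reference, there is nothing further to reconcile; your combinatorial identity, the MGF comparison, and the Bernstein step for the block average are all sound.
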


\begin{proposition}[\cite{merlevede2009bernstein}]\label{prop:2}
Denote by $S_T:=\sum_{t=1}^T  h(X_t)$ with $|h(\cdot)|\leq M$ and $\theta(h)=0$. Suppose there exists an absolute positive constant $\gamma$ such that the sequence $\{X_t\}_{t\in\Z}$ satisfies the following $\alpha$-mixing condition\footnote{The quantity $\alpha(n)$ will be defined later in Equation \eqref{eq:alphamixing}.}:
\begin{align*}
\alpha(n)\lesssim\exp(-\gamma n).
\end{align*}
Then there exist positive absolute constants $C_1$ and $C_2$, depending only on $\gamma$, such that for $T=1,2,\ldots$ and $\eta$ satisfying $0<\eta<(C_1M(\log T)(\log\log 4T))^{-1}$, we have 
\begin{align*}
\log\E[\exp(\eta S_T)]\leq\frac{C_2\eta^2TM^2}{1-C_1\eta M(\log T)(\log\log 4T)}.
\end{align*}
In terms of probabilities, there exists an absolute constant $C_3>0$, depending only on $\gamma$, such that for $T=1,2,\ldots$ and $x\geq0$, we have
\begin{align*}
\P(|S_T|\geq x)\leq 2\exp\Big(-\frac{C_3 x^2}{TM^2+Mx(\log T)(\log\log 4T)}\Big).
\end{align*}
\end{proposition}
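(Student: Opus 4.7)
The plan is to reduce $S_T$ to a sum of nearly-independent blocks via the standard Bernstein-type big-block/small-block decomposition. First I would partition $\{1,\ldots,T\}$ into alternating big and small blocks of lengths $p$ and $q$ respectively, producing $k\asymp T/(p+q)$ of each. Setting $Y_i:=\sum_{t\in B_i} h(X_t)$ for the $i$-th big block and $Z_i:=\sum_{t\in b_i} h(X_t)$ for the $i$-th small block gives $S_T=\sum_{i=1}^k Y_i+\sum_{i=1}^k Z_i$ (absorbing any remainder of length $\le p+q$ into a constant). The small-block contribution is controlled trivially by $|\sum_i Z_i|\le Mqk\lesssim MTq/(p+q)$ and will be negligible once $q\ll p$, so the main task is to bound the Laplace transform $\E[\exp(\eta\sum_i Y_i)]$.

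Because consecutive big blocks are separated by a gap of length $q$ and $\|\exp(\eta Y_i)\|_\infty\le \exp(\eta M p)$, the $\alpha$-mixing condition yields a Davydov/Rio-type peeling inequality of the form
\begin{align*}
\Big|\E[\exp(\eta Y_j+\eta R_j)]-\E[\exp(\eta Y_j)]\,\E[\exp(\eta R_j)]\Big|\le C\exp(2\eta M p)\,\alpha(q),
\end{align*}
where $R_j:=\sum_{i>j}Y_i$. Iterating this $k-1$ times replaces the joint MGF by $\prod_{i=1}^k \E[\exp(\eta Y_i)]$ up to an additive error of order $k\exp(2k\eta M p)\alpha(q)$, and Hoeffding's MGF bound inside each block then gives $\E[\exp(\eta Y_i)]\le\exp(C\eta^2 p M^2)$. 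Using the exponential decay $\alpha(q)\lesssim\exp(-\gamma q)$ and choosing $p\asymp q\asymp(\log T)(\log\log 4T)$ makes the accumulated coupling error $O(1)$ uniformly for $\eta$ in the stated range, and combining the ingredients yields the claimed log-MGF bound. The stated tail bound then follows by Markov's inequality with $\eta$ optimized in the usual Bernstein manner.

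The hardest step is the peeling inequality. Under $\beta$-mixing, Berbee's coupling provides an essentially exact replacement of each block by an independent copy at cost $\beta(q)$, and the argument goes through cleanly. Under $\alpha$-mixing, however, one only controls $\sup|\P(A\cap B)-\P(A)\P(B)|$, so the error in any covariance-type bound on the MGF necessarily picks up the sup-norm $\exp(\eta M p)$ of $\exp(\eta Y_i)$. This exponential cost must then be paid $k$ times in the iteration, and balancing $k\exp(2k\eta M p)\exp(-\gamma q)$ against the permissible range of $\eta$ is precisely what forces the extra $\log\log 4T$ factor that distinguishes this bound from its $\beta$-mixing analogue.
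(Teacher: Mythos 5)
First, a point of comparison: the paper does not prove Proposition \ref{prop:2} at all; it is imported verbatim from \cite{merlevede2009bernstein}, so the only meaningful benchmark is that paper's argument, which rests on a recursive, Cantor-set-like multi-scale blocking rather than the single-scale big-block/small-block scheme you propose. That difference is not cosmetic.

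Your decoupling step has a genuine, quantifiable gap. The accumulated peeling error you write as $k\exp(2k\eta Mp)\alpha(q)$ equals $k\exp(2\eta MT)\alpha(q)$ up to constants, because the sup-norm of the tail sum $R_j=\sum_{i>j}Y_i$ is $\exp(\eta Mp(k-j))$, whose exponent scales with the \emph{whole sample}, not with a single block. At the top of the admissible range, $\eta\asymp (M(\log T)(\log\log 4T))^{-1}$, this factor is $\exp\big(cT/((\log T)(\log\log 4T))\big)$, whereas your choice $q\asymp(\log T)(\log\log 4T)$ only yields $\alpha(q)\lesssim T^{-\gamma\log\log 4T}$; the product is stretched-exponentially large in $T$, not $O(1)$ as claimed. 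Moreover, no choice of $(p,q)$ rescues the scheme: killing the decoupling error forces $\gamma q\gtrsim \eta MT$, i.e.\ $q\gtrsim T/((\log T)(\log\log 4T))$, while keeping the small-block remainder $\eta Mqk$ below the target $C\eta^2TM^2$ forces $q/(p+q)\lesssim ((\log T)(\log\log 4T))^{-1}$ and hence $p\gtrsim q(\log T)(\log\log 4T)\gtrsim T$, leaving at most one block. This is precisely the obstruction that \cite{merlevede2009bernstein} circumvent with a dyadic construction in which, at scale $\ell$, blocks of length $\asymp T2^{-\ell}$ are separated by gaps proportional to their own length, so that $\exp(\eta M\cdot(\text{block length}))\alpha(\text{gap})$ is controlled simultaneously at each of the $\asymp\log T$ scales; the $(\log T)(\log\log 4T)$ factor is the cost of that recursion, not of a single balancing act. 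A secondary, repairable issue: within a big block the $X_t$ are still dependent, so Hoeffding's bound gives only $\E[\exp(\eta Y_i)]\le\exp(\eta^2M^2p^2/2)$, not $\exp(C\eta^2pM^2)$; to get the latter you need a Davydov-type estimate $\mathrm{Var}(Y_i)\lesssim pM^2$ together with a Bernstein-type one-variable MGF bound, at the price of a $(1-c\eta Mp)^{-1}$ in the exponent. That part can be fixed; the single-scale decoupling cannot.
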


This paper aims to extend the above results to investigating U-statistics of order $r\geq 2$ under mixing conditions. %under mixing conditions. 
%This is a long standing problem. % that under what mixing conditions U-statistics enjoy exponential concentration. 
The absence of such results is connected to a core problem: In the time series setting, U-statistics of order 2 or larger are fundamentally different from the sample-mean-type estimators. It is obvious that U-statistics of higher orders are no longer unbiased when the data are correlated. The bias is particularly strong when $t_1,\ldots, t_r$ are close to each other. Therefore, intuitively, U-statistics are more vulnerable to temporal dependence than the sample-mean-type statistics. In addition, technically speaking, %the second and maybe more important fact is the fundamental relation between U-statistics and data exchangeability. Specifically,
as is noted in Hoeffding's seminal paper \citep{hoeffding1963probability}, to examine the tail behavior of U-statistics,  we heavily exploit the following property of U-statistics (referred to as Hoeffding's decoupling):
\begin{align}
U_r(X_1,\ldots, X_T)=\frac{1}{|\cS_T|}\sum_{\sigma\in \cS_T}\frac{1}{\lfloor T/r \rfloor}\sum_{(s_1,\ldots,s_r)\in O(\sigma)}h(X_{s_1},\ldots,X_{s_r}).
\end{align}
Here $\cS_T$ represents the set of all permutations of $\{1,\ldots,T\}$, $|\cS_T|$ represents the cardinality of $\cS_T$, $\lfloor T/r \rfloor$ represents the largest integer that is smaller than or equal to $T/r$, $(t_1,\ldots,t_T)=\sigma(1,\ldots,T)$ is a permuted series of $\{1,\ldots,T\}$, and 
\[
O(\sigma):=\Big\{(t_1,\ldots,t_r),(t_{r+1},\ldots,t_{2r}),\ldots, (t_{r\lfloor T/r \rfloor-r+1},\ldots,t_{r\lfloor T/r \rfloor})\Big\}
\] 
includes the consecutive sets of $\sigma(1,\ldots,T)$ without overlapping. %Such an insightful observation reflects that U-statistics intrinsically ignore the order of data points, and hence 
Therefore, for analysis, we naturally desire the data to be exchangeable \citep{koroljuk1994theory}. However, the time series does not satisfy the data exchangeability property. 

%In light of the above thinking, 
This paper proves a novel exponential inequality for U-statistics, successfully addressing the above concerns. At the core of the analysis is a novel decomposition of U-statistics that is analogous to, yet fundamentally different from, the Hoeffding's decomposition approach \citep{hoeffding1948class,serfling2009approximation}. Our decomposition strategy explicitly uses the structure of the time series, could be extended to study non-stationary ones, and sheds light to understanding the mechanics of U-statistics when temporal correlatedness exists in the data. What follows is a generalization of Propositions \ref{prop:1} and \ref{prop:2}.
%, under a new mixing condition, we generalize the results in the above two propositions to the case when $r\geq 1$ and the data sequence is not necessarily independent, 
The tail bound proves to be analogous to the above ones. 

%For illustrating the usefulness of the derived results, 
In the end, we provide a brief discussion on applications to high dimensional time series analysis. There we show the usefulness of the derived results on building rigorous inference. 
%We do believe that such a new decomposition 

\subsection{Notation}
Let $\N$, $\Z$, and $\reals$ represent the sets of natural numbers, integers, and real numbers. %, and the floor and ceiling functions $\lfloor x\rfloor$ and $\lceil x\rceil$ as the largest integer not greater than $x$ and the smallest integer not less than $x$. 
%For any two random vectors $\bX,\bY\in \reals^d$, we write $\bX\stackrel{{\sf d}}{=}\bY$ if $\bX$ and $\bY$ are identically distributed. 
For each $n\in\N$, we define $[n]=\{1,2,\ldots,n\}$. For any two real sequences $\{a_n\}$ and $\{b_n\}$, we write $a_n \lesssim b_n$, or equivalently $b_n \gtrsim a_n$, if there exists an absolute constant $C>0$ such that $|a_n|\leq C|b_n|$ for any large enough $n$. %We write $a_n\asymp b_n$ if $a_n \lesssim b_n$ and $b_n\lesssim a_n$.
The term ``a.s." stands for ``almost surely". For any $x\in\reals$, we define the sign function $\sign(x):=x/|x|$, where by convention we let $0/0=0$.  For any vector $\bv\in\reals^n$ and set $I\subset [n]$, let $\bv_I$ denote the sub-vector of $\bv$ with entries indexed by $I$.  For a given matrix $\Mb=[M_{jk}]\in\reals^{n\times n}$, we write $\norm{\Mb}_{\max}:=\max_{jk}|M_{jk}|$. Throughout the paper, let $C, C'>0$ be two generic absolute constants, whose actual values may vary at different locations.

\section{Main results}

%This section is devoted to characterizing the tail behaviors of $U_r(X_1,\ldots,X_T)-\theta(h)$. 
Before providing the main results derived in this paper, let's first introduce some necessary notation. For any two sigma fields $\cA$ and $\cB$ defined on a probability space $(\Omega, \cF, \P)$, we define the following three measures of dependence:
\begin{align*}
\alpha(\cA,\cB;\P)&=\sup_{A\in\cA,B\in\cB}|\P(A\cap B)-\P(A)\P(B)|,\\
\phi(\cA,\cB;\P)&=\sup_{A\in\cA,B\in\cB,\P(A)>0}|\P(B|A)-\P(B)|,\\
\beta(\cA,\cB;\P)&=\sup\frac{1}{2}\sum_{i=1}^I\sum_{j=1}^J |\P(A_i\cap B_j)-\P(A_i)\P(B_j)|,
\end{align*}
where the supremum is taken over all pairs of partitions of $\Omega$ such that $A_i\in\cA$ and $B_j\in\cB$. 

Let $\{X_t\}_{t\in\Z}$ be the sequence of random variables of interest defined on the probability space $(\Omega, \cF, \P)$. For $-\infty\leq J\leq L\leq \infty$, define the sigma field generated by $\{X_t; J\leq t\leq L\}$ as follows:
\begin{align*}
\cF_{J}^{L}=\sigma(X_t;J\leq t\leq L,t\in\Z).
\end{align*}
For each $n\geq1$, define the three dependence coefficients corresponding to the sequence $\{X_t\}_{t\in\Z}$:
\begin{align}\label{eq:alphamixing}
\alpha(n)=\sup_{j\in\Z}\alpha(\cF_{-\infty}^j,\cF_{j+n}^{\infty};\P),~ \phi(n)=\sup_{j\in\Z}\phi(\cF_{-\infty}^j,\cF_{j+n}^{\infty};\P),~\beta(n)=\sup_{j\in\Z}\beta(\cF_{-\infty}^j,\cF_{j+n}^{\infty};\P).
\end{align}
The sequence $\{X_t\}_{t\in\Z}$ is said to be $\alpha$-(or $\phi$-, or $\beta$-)mixing if $\alpha(n)$ (or $\phi(n)$, or $\beta(n)$)
tends to zero as $n\rightarrow \infty$. In addition, if $X_t$ is independent of $X_s$ as long as $|t-s|>m$, $\{X_t\}_{t\in\Z}$ is said to be $m$-dependent. It is obvious $\alpha(n)\leq\beta(n)\leq\phi(n)$, and the corresponding mixing conditions are all weaker than $m$-dependence.

%Mixing conditions are very well exploited % and do not pose any explicit structure on the time series 
%\citep{bradley2005basic}. The mixing coefficients corresponding to different time series models (e.g., autoregressive (AR), moving average (MA), autoregressive-moving-average (ARMA), and copula-based models) have been calculated. We refer the readers to \cite{liebscher2005towards} and \cite{beare2010copulas} for more detailed discussions in this track. 

\subsection{Main theorem}

%This section aims to characterize  the tail behavior of U-statistics in the context of time series analysis.   In particular, we provide a novel Bernstein-type inequality for quantifying the exponential concentration phenomenon of U-statistics to $\theta(h)$. Such a result is of fundamental importance in high dimensional inference, and is absent in the literature.

For analysis, we require the following three assumptions on the time series and U-statistics:
\begin{itemize}
\item {\bf Assumption (A1).} Assume $\{X_t\}_{t\in\Z}$ is strictly stationary, and there exists an absolute constant $\delta\geq1$ such that for any $n\geq 1$, we have the $\beta$-mixing coefficient, corresponding to $\{X_t\}_{t\in\Z}$, satisfies
$\beta(n) \lesssim n^{-\delta}. $
\item {\bf Assumption (A2).} Assume, uniformly, for any integer $J$ such that $1\leq J\leq r-1$ and arbitrary $1\leq t_1<\cdots< t_J \leq T$, conditional on $X_{t_1},\ldots,X_{t_{J}}$,  the sequence $\{X_t\}_{t=t_J+1}^{\infty}$ satisfies, for the $\alpha$-mixing coefficient corresponding to it, 
\[
\alpha(n; X_{t_1},\ldots,X_{t_J}):=\sup_{j\geq t_J+1}\alpha(\cF_{t_J+1}^j,\cF_{j+n}^{\infty};\P(\cdot|X_{t_1},\ldots,X_{t_J})) \lesssim \exp(-\gamma n), ~~{\rm a.s.},
\]
where $\P(\cdot|X_{t_1},\ldots,X_{t_J})$ stands for the conditional probability\footnote{In other words, there exists an absolute constant $C>0$ such that $\P\big(\alpha(n; X_{t_1},\ldots,X_{t_J})\leq C\exp(-\gamma n)\big)=1$ for any large enough $n$.}. In particular, we have, for the $\alpha$-mixing coefficient corresponding to $\{X_t\}_{t\in\Z}$ itself, 
\[
\alpha(n) \lesssim \exp(-\gamma n).
\]
\item {\bf Assumption (A3).} The kernel function $h(\cdot)$ is symmetric\footnote{Note any asymmetric kernel function could be converted to a symmetric one \citep{serfling2009approximation}. Therefore, the kernel symmetry is not a constraint. } and there exists a positive absolute constant $M$ such that  $|h(x_1,\ldots,x_r)|\leq M$ for any $(x_1,\ldots,x_r)\in\cX^r$.
\end{itemize}

%\begin{remark}
%The stationarity assumption {\bf (A1)} is typical in the literature of weakly dependent data analysis. Check, for example, \cite{shao1993bootstrapping}, \cite{yoshihara1976limiting}, and \cite{dehling2010central} among many others. 
%\end{remark}

Under Assumptions {\bf (A1)-(A3)}, our main theorem, given below, is a generalization of Propositions \ref{prop:1} and \ref{prop:2}. 

\begin{theorem}\label{thm:main} Let $\{X_t\}_{t\in\Z}$ be a data sequence, along with the kernel function $h(\cdot)$, satisfying Assumptions {\bf (A1), (A2),} and {\bf (A3)}. We then have, there exist absolute constants $C_4, C_5>0$ only depending on $\gamma$ and $r$, such that, for any $x\geq 0$ and $T$ sufficiently large,
\begin{align*}
\P(|U_r(X_1,\ldots,X_T)-\theta(h)|\geq C_4M/\sqrt{T}+x)\leq2\exp\Big(-\frac{C_5x^2T}{M^2+Mx(\log T)(\log\log 4T)}\Big).
\end{align*}
\end{theorem}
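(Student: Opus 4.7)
The plan is to reduce $U_r(X_1,\ldots,X_T)-\theta(h)$ to a sample-mean-type sum that Proposition \ref{prop:2} handles directly, absorbing the residual into the offset $C_4M/\sqrt{T}$. Define the projection kernel
\[
g(x):=\int h(x,y_2,\ldots,y_r)\,d\P(y_2)\cdots d\P(y_r)-\theta(h),
\]
which is bounded by $2M$ and has mean zero under Assumption (A1). The identity I would aim for is
\[
U_r(X_1,\ldots,X_T)-\theta(h)=\frac{r}{T}\sum_{t=1}^T g(X_t)+R_T,
\]
with a remainder $R_T$ bounded (deterministically or with overwhelming probability) by $C_4M/\sqrt{T}$. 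Granted this, $S_T:=\sum_t g(X_t)$ is a sum of bounded variables whose sequence inherits the $\exp(-\gamma n)$ mixing rate from $\{X_t\}$, so Proposition \ref{prop:2} delivers the stated Bernstein tail on $rS_T/T$, and $R_T$ supplies the offset.

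Building and controlling the decomposition is the bulk of the work. Fix a spacing $\ell:=C\gamma^{-1}\log T$ and split $\{1\le t_1<\cdots<t_r\le T\}$ into \emph{well-separated} tuples (all consecutive gaps $\ge\ell$) and \emph{diagonal} tuples. The diagonal set has at most $O(T^{r-1}\ell)$ members, contributing $O(M\log T/T)$ to the normalized $U_r$, well inside the $M/\sqrt{T}$ budget. On each well-separated tuple, run the forward-conditional expansion
\[
h(X_{t_1},\ldots,X_{t_r})-\theta(h)=\sum_{j=1}^r (\mu_j-\mu_{j-1}),\qquad \mu_j:=\E[h(X_{t_1},\ldots,X_{t_r})\mid X_{t_1},\ldots,X_{t_j}],\ \mu_0:=\theta(h).
\]
Assumption (A2), applied iteratively, shows that $\mu_j$ lies within $O(M\exp(-\gamma\ell))$ of its integrated-out counterpart, in which $X_{t_{j+1}},\ldots,X_{t_r}$ are replaced by iid marginal copies; summing over the trailing indices collapses the $j$-th term into a bounded function of $X_{t_1},\ldots,X_{t_j}$ alone. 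Rearranging, the $j=1$ contribution reconstructs $rT^{-1}\sum_t g(X_t)$, while the $j\ge 2$ contributions become normalized sub-U-statistics of strictly lower order, enabling induction on $r$.

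The principal obstacle is the clean accounting of $R_T$ at the sharp $M/\sqrt{T}$ order. A naive bound on the $j\ge 2$ sub-U-statistics does not suffice: degenerate sub-statistics of a dependent sequence need not enjoy the iid-style $O(T^{-k/2})$ cancellation, and the per-tuple approximation errors, although exponentially small, are summed over $O(T^r)$ tuples and must be prevented from blowing up. I would exploit Assumption (A1)'s polynomial $\beta$-mixing rate to couple the outer indices to independent blocks between conditioning steps, and combine this coupling with the gap constraint inside the recursion, inductively bounding the $k$-th level by $O(M\,(\log T)^{c}/T)$. Coordinating block lengths, coupling errors, and the $r$ levels of recursion while keeping the constants dependent only on $\gamma$ and $r$ is the delicate point, and is presumably where the ``novel decomposition'' advertised in the introduction does its work.
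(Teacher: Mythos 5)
Your reduction hinges on the identity $U_r(X_1,\ldots,X_T)-\theta(h)=\tfrac{r}{T}\sum_t g(X_t)+R_T$ with $R_T$ controlled at the order $M/\sqrt{T}$, and that control is precisely the hard content of the theorem; your sketch does not supply it. After the first-order projection, $R_T$ is a combination of sub-U-statistics whose kernels are degenerate only with respect to the \emph{product} measure $\P\otimes\cdots\otimes\P$: under temporal dependence these kernels are neither mean zero nor conditionally centered under the true joint law, so there is no reason for the corresponding sub-U-statistics to be $o_P(T^{-1/2})$, let alone deterministically $O(M/\sqrt{T})$. The tools you invoke cannot close this gap: Proposition \ref{prop:2} handles only linear sums; an induction on $r$ using the theorem itself gives the lower-order pieces only at rate $\sqrt{\log T/T}$, which is not $o(T^{-1/2})$; and exponential concentration for \emph{degenerate} U-statistics of mixing sequences is exactly the problem the paper flags as essentially unresolved (cf.\ the discussion of \cite{borisov2009exponential}). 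Moreover, a bound on $R_T$ that holds only ``with overwhelming probability'' would force an additive failure probability into the right-hand side, which is incompatible with the stated clean bound; the offset $C_4M/\sqrt{T}$ must absorb a deterministic quantity. (Your diagonal-tuple count and the $M\exp(-\gamma\ell)$ per-tuple approximation errors are fine after normalizing by $\binom{T}{r}^{-1}$, but those are the easy parts.)

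The paper avoids degenerate U-statistics altogether. It centers at $\theta^*(h)=\E U_r(X_1,\ldots,X_T)$ rather than at $\theta(h)$, so the offset $C_4M/\sqrt{T}$ covers only the deterministic bias $|\theta^*(h)-\theta(h)|$ (Theorem \ref{thm:bias}, using (A1)). The random part $U_r-\theta^*(h)$ is telescoped through the conditional expectations $\hat\theta_{[t_1:t_r]}^{[t_1:t_{r-k}]}$ given the earliest-in-time arguments, producing levels $S_1,\ldots,S_r$; at level $k$, conditionally on the history $X_{t_1},\ldots,X_{t_{r-k}}$, the sum over the single running index $t_{r-k+1}$ consists of bounded, exactly conditionally centered, conditionally $\alpha$-mixing scalars, so Proposition \ref{prop:2} applies at \emph{every} level --- no level needs to be shown negligible. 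The levels are then recombined via the additive Laplace-transform inequality (Lemma \ref{lem:combn}), with the $T^{k-1}$ normalizations arranged so that each level contributes at the same $T^{-1/2}$ scale. To salvage your plan you would need a genuinely new concentration result for the dependent degenerate remainders; as written, the proposal does not prove the theorem.
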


%\fbox{boundednees, centeredness, bias, and variance}

In the following, we first provide several remarks on the assumptions we posed. 

\begin{remark}
Assumption {\bf(A1)} is added only for proving that the bias $\E U_r(X_1,\ldots,X_T)-\theta(h)$ is small. Later, without requiring {\bf (A1)}, Theorem \ref{thm:variance} will show $U_r(X_1,\ldots,X_T)$ converges to $\E U_r(X_1,\ldots,X_T)$ exponentially fast. 
\end{remark}
\begin{remark}
The $\beta$-mixing condition {\bf (A1)} is typically required in obtaining asymptotic normality for U-statistics, when no stringent Lipchitz-continuity assumption on the kernel functions is posed \citep{yoshihara1976limiting,dehling2010central}. Via assuming kernel boundedness, we could give a better $\beta$-mixing decaying rate than \cite{yoshihara1976limiting}.  Of note, the rate in Assumption {\bf (A1)} is attainable in many situations. For example,  \cite{longla2012some} and \cite{longla2013remarks} provided sufficient conditions under which the Markov chain is $\beta$-mixing with exponentially decaying rate. We also refer the readers to \cite{mokkadem1990proprietes}  for similar results in ARMA models. 
\end{remark}
\begin{remark}
Assumption {\bf (A2)} is weaker than the $\phi$-mixing condition in many cases. As will be shown in the next section, finite-state and vector-valued absolutely continuous data sequences of exponentially $\phi$-mixing decaying rate satisfy {\bf (A2)}. %Accordingly, using the results in \cite{longla2012some}, 
Accordingly, Assumption {\bf (A2)} holds for many copula-based Markov chains \citep{longla2012some}. %We conjecture that any such $\phi$-mixing sequence also satisfies Assumption {\bf (A2)}. As a matter of fact, using the results in \cite{longla2012some}, Assumption {\bf (A2)} holds for numerous copula-based Markov chains. %By a very easy argument, we can similarly show its copula-based Markov chain counterpart, extensively discussed in \cite{han2015rate}, also satisfies Assumption {\bf (A2)}. %We leave more discussions on {\bf (A2)} until then. 
\end{remark}
\begin{remark}
In contrast to the results in \cite{yoshihara1976limiting}, in Assumption {\bf (A2)}, we require an exponentially, instead of polynomially, mixing decaying rate. This is because, for obtaining sharp concentration inequality, compared to \cite{yoshihara1976limiting} and \cite{dehling2010central}, we need to calculate higher moments of $U_r(\cdot)$. The ``exponentially decaying rate" condition is routine in the literature of deriving concentration inequalities for weakly dependent data. For this, we refer the readers to \cite{merlevede2009bernstein}, \cite{merlevede2011bernstein}, and the arguments therein. 
\end{remark}
%\begin{remark}
%For presentation clearness, here we pose a boundedness assumption {\bf (A3)} on $U_r(\cdot)$. %It is similar to \cite{merlevede2009bernstein}. 
%However, the boundedness assumption could be easily relaxed to moment conditions via a standard truncation argument by setting $M:=(\log T)^{C}$ for some large enough absolute constant $C>0$. 
%\end{remark}

Secondly, we compare Theorem \ref{thm:main} to Theorem 2 in \cite{borisov2009exponential}, which to our knowledge is the only relevant exponential concentration inequality for U-statistics under mixing conditions. \cite{borisov2009exponential} aimed to study the tail behaviors of canonical (degenerate) U-statistics under the $\phi$-mixing condition. There are three main observations: %(1) Their results heavily rely on the condition that the U-statistic is degenerate. It is unclear whether the results could be extended to study non-degenerate U-statistics. 
%(ii) for characterizing the tail behavior, \cite{borisov2009exponential} required continuity of the kernel function, which does not hold for many U-statistics; 
(i) The result in Theorem \ref{thm:main} can be extended to study non-stationary time series (see Theorem \ref{thm:variance} for details), while the results in \cite{borisov2009exponential} cannot; (ii) the orthogonal expansion of kernel functions and related conditions in Theorem 2 therein are difficult to verify and interpret. In comparison, the exponential inequality obtained in this paper is clear and easy to use; and (iii) the proof strategy built in this paper, based on a novel decomposition of U-statistics, is fundamentally different from theirs, which is built on the classic treatment to U-statistics: linear expansions to kernels and decoupling kernels to sample-mean-type statistics \citep{arcones1993limit}.

\subsection{Discussions on Assumption (A2)}

This section discusses the rationality of Assumption {\bf (A2)}. First, we prove that Assumption {\bf (A2)} is weaker than the finite-state or vector-valued absolutely continuous $\phi$-mixing condition.% that has been extensively studied in \cite{kontorovich2008concentration}. 

\begin{proposition}\label{prop:3}
Let $\{X_t\}_{t\in\Z}$ be a sequence of random variables with each value in a finite set $G_t$. Suppose $\{X_t\}_{t\in\Z}$ satisfies the $\phi$-mixing condition:
\begin{align*}
\phi(n)\lesssim\exp(-C n),~~{\rm for}~n\geq 1, 
\end{align*}
where $C$ is an absolute positive constant. Then, uniformly, for any integer $J$ such that $1\leq J\leq r-1$ and arbitrary $1\leq t_1<\ldots<t_J\leq T$, conditional on $X_{t_1},\ldots,X_{t_J}$, the sequence $\{X_t\}_{t=t_J+1}^{\infty}$ satisfies, for the $\phi$-mixing coefficient corresponding to it, 
\begin{align}\label{eq:phi}
\phi(n;X_{t_1},\ldots,X_{t_J}):=\sup_{j\geq t_J+1}\phi(\cF_{t_J+1}^j,\cF_{j+n}^{\infty};\P(\cdot|X_{t_1},\ldots,X_{t_J})) \lesssim \exp(-Cn), ~~~{\rm a.s.}.
\end{align}
\end{proposition}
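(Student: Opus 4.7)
The plan is to deduce the conditional $\phi$-mixing bound directly from the unconditional one via a triangle inequality, with the finite-state assumption used solely to ensure that the conditioning event has positive probability, so that we can bypass regular-conditional-probability subtleties and work with elementary conditional probabilities.

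First I would fix any realization $(x_1,\ldots,x_J)\in G_{t_1}\times\cdots\times G_{t_J}$ such that the event $E:=\{X_{t_1}=x_1,\ldots,X_{t_J}=x_J\}$ satisfies $\P(E)>0$; realizations with $\P(E)=0$ aggregate to a $\P$-null set and can be ignored for the almost-sure claim. Observe that $E\in\cF_{-\infty}^{t_J}\subset\cF_{-\infty}^{j}$ for every $j\geq t_J+1$. For any $A\in\cF_{t_J+1}^{j}$ with $\P(A\cap E)>0$ and any $B\in\cF_{j+n}^{\infty}$, we then have $A\cap E\in\cF_{-\infty}^{j}$, so applying the hypothesized unconditional $\phi$-mixing bound to the pair $(\cF_{-\infty}^{j},\cF_{j+n}^{\infty})$ with ``conditioning events'' taken to be $A\cap E$ and $E$ respectively gives
\[
|\P(B\mid A\cap E)-\P(B)|\leq\phi(n)\qquad\text{and}\qquad|\P(B\mid E)-\P(B)|\leq\phi(n),
\]
where monotonicity of $\phi(\cdot)$ is invoked for the second inequality since the associated gap is $j+n-t_J\geq n$. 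A triangle inequality then yields $|\P(B\mid A\cap E)-\P(B\mid E)|\leq 2\phi(n)$.

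The left-hand side above is precisely the quantity $|\P(B\mid A)-\P(B)|$ computed under the elementary conditional law $\P(\cdot\mid X_{t_1}=x_1,\ldots,X_{t_J}=x_J)$. Taking the supremum over admissible $A\in\cF_{t_J+1}^{j}$, over $B\in\cF_{j+n}^{\infty}$, and over $j\geq t_J+1$, I would conclude that $\phi(n;x_1,\ldots,x_J)\leq 2\phi(n)$ for every realization on which the conditional law is well defined. Because the state spaces $G_{t_1},\ldots,G_{t_J}$ are finite, there are only finitely many such realizations, and this uniform bound translates into the almost-sure statement $\phi(n;X_{t_1},\ldots,X_{t_J})\leq 2\phi(n)$. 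Inserting the hypothesis $\phi(n)\lesssim\exp(-Cn)$ closes the argument, absorbing the factor of $2$ into the implicit constant.

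I do not anticipate a substantive obstacle. The only delicate point is to justify the identification $\P(B\mid A,X_{t_1}=x_1,\ldots,X_{t_J}=x_J)=\P(B\mid A\cap E)$ in the ordinary sense, and finiteness of the state space is exactly what guarantees this whenever $\P(A\cap E)>0$. Notably, the same triangle-inequality strategy would carry over to vector-valued absolutely continuous $\phi$-mixing sequences once regular conditional probabilities are invoked, which explains the remark following the proposition, although the present statement is formulated only for the finite-state case.
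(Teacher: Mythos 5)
Your proof is correct and follows essentially the same route as the paper's: both arguments bound $|\P(B\mid A\cap E)-\P(B\mid E)|$ by inserting the unconditional probability $\P(B)$ and applying the $\phi$-mixing hypothesis twice (once to the enlarged past event $A\cap E\in\cF_{-\infty}^{j}$ with gap $n$, once to $E$ alone), then absorb the resulting factor of $2$ into the implicit constant. Your write-up is a slightly more careful rendering of the same idea --- conditioning on the finitely many positive-probability atoms and explicitly identifying the conditional law, where the paper argues directly with cylinder-type events --- but there is no substantive difference in approach.
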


Of note, such a finite-state $\phi$-mixing sequence could always be obtained via ``truncating" a $\phi$-mixing sequence. To see this, let $\{X_t\}_{t\in\Z}$ be a sequence of random variables defined on $(\Omega,\cF,\P)$ with each mapping to a measurable space $(\cX,\cB_{\cX})$. Let $P:=\{A_1,\ldots,A_l\}$ be a finite measurable partition of $\cX$ and $\{a_1,\ldots,a_l\}$ be a set of arbitrary $l$ different objects. Define $\{\bar{X_t}\}_{t\in \Z}$ to be random variables satisfying that $\bar{X_t}=a_i$ if $X_t\in A_i$ for $i=1,\ldots,l$ and $t\in\Z$. If $\{X_t\}_{t\in\Z}$ satisfies the $\phi$-mixing condition with
\begin{align}\label{eq:han-2}
\phi(n)\lesssim\exp(-C n),
\end{align}
where $C$ is an absolute positive constant, we have $\{\bar{X}_t\}_{t\in\Z}$ also satisfies the same $\phi$-mixing condition. Then, using Proposition \ref{prop:3}, uniformly, for any integer $J$ such that $1\leq J\leq r-1$ and arbitrary $1\leq t_1<\ldots<t_J\leq T$, conditional on $\bar{X}_{t_1},\ldots,\bar{X}_{t_J}$,  the sequence $\{\bar{X}_t\}_{t=t_J+1}^{\infty}$ satisfies the $\phi$-mixing condition in \eqref{eq:phi} almost surely.  

We then show the vector-valued absolutely continuous $\phi$-mixing time series also satisfies Assumption {\bf (A2)}. 

\begin{proposition}\label{prop:new1} The conclusion in Proposition \ref{prop:3} also holds for vector-valued absolutely continuous time series.% of a continuous density function. 
%Let $\{X_t\}_{t\in\Z}$ be a sequence of absolutely continuous random variables. Suppose $\{X_t\}_{t\in\Z}$ satisfies the $\phi$-mixing condition:
%\begin{align*}
%\phi(n)\lesssim\exp(-C n),~~{\rm for}~n\geq 1, 
%\end{align*}
%where $C$ is an absolute positive constant. Then, for any integer $J$ such that $1\leq J\leq r-1$ and arbitrary $1\leq t_1<\ldots<t_J\leq T$, conditional on $X_{t_1},\ldots,X_{t_J}$, the sequence $\{X_t\}_{t=t_J+1}^{\infty}$ satisfies, for the $\phi$-mixing coefficient corresponding to it, 
%\begin{align}\label{eq:phi}
%\phi(n;X_{t_1},\ldots,X_{t_J}) \lesssim \exp(-Cn), ~~~{\rm a.s.}.
%\end{align}
\end{proposition}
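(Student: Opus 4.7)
My plan is to reduce Proposition~\ref{prop:new1} to the already-established Proposition~\ref{prop:3} via a discretization-and-limit argument. First I fix a nested sequence of finite measurable partitions $\{\mathcal{P}_k\}_{k\geq 1}$ of the state space $\R^d$; for concreteness, intersect $[-k,k]^d$ with the dyadic grid of mesh $2^{-k}$ and append the exterior cell. Assigning distinct labels to the cells of $\mathcal{P}_k$, define the finite-valued sequence $\bar X_t^{(k)}$ exactly as in the paragraph preceding Proposition~\ref{prop:new1}. Because $\sigma(\bar X_t^{(k)})\subseteq \sigma(X_t)$ and taking sub-$\sigma$-fields can only decrease a $\phi$-coefficient, $\{\bar X_t^{(k)}\}_{t\in\Z}$ inherits the exponential $\phi$-mixing rate~\eqref{eq:han-2} with the same implicit constant, uniformly in $k$. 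Proposition~\ref{prop:3} then supplies
\[
\phi(n;\bar X_{t_1}^{(k)},\ldots,\bar X_{t_J}^{(k)})\lesssim\exp(-Cn)\qquad\text{a.s.,}
\]
uniformly in $k\geq1$ and in $j\geq t_J+1$.

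Next I pass $k\to\infty$. Because the partitions are nested, $\mathcal{G}_k:=\sigma(\bar X_{t_1}^{(k)},\ldots,\bar X_{t_J}^{(k)})$ forms an increasing sequence of $\sigma$-fields whose join equals $\mathcal{G}:=\sigma(X_{t_1},\ldots,X_{t_J})$ (this is where absolute continuity and the choice of a grid whose mesh shrinks to $0$ are used). By L\'evy's upward theorem, $\E[Y\mid\mathcal{G}_k]\to \E[Y\mid\mathcal{G}]$ almost surely and in $L^1$ for every bounded measurable $Y$. Applied to $Y=\mathbf{1}_A,\mathbf{1}_B,\mathbf{1}_{A\cap B}$ with fixed $A\in\cF_{t_J+1}^j$, $B\in\cF_{j+n}^\infty$, this propagates the linearized bound
\[
|\P(A\cap B\mid \mathcal{G}_k)-\P(A\mid\mathcal{G}_k)\P(B\mid\mathcal{G}_k)|\;\lesssim\; \P(A\mid\mathcal{G}_k)\exp(-Cn),
\]
which is equivalent, through the identity $\phi(\mathcal{A},\mathcal{B};\P)=\sup_{A,B}|\P(A\cap B)-\P(A)\P(B)|/\P(A)$, to the conditional $\phi$-coefficient bound for the discretized sequence, onto the same bound with $\mathcal{G}_k$ replaced by $\mathcal{G}$, for each fixed pair $(A,B)$.

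The last step is to upgrade the per-pair bound to the supremum defining $\phi(n;X_{t_1},\ldots,X_{t_J})$. Each of $\cF_{t_J+1}^j$ and $\cF_{j+n}^\infty$ is generated by a countable family of Borel random variables on the Polish space $\R^d$, hence admits a countable generating algebra. Intersecting the countably many probability-one events arising from pairs in these algebras and from $j\geq t_J+1$, the linearized bound holds simultaneously on a pair of generating $\pi$-systems on a single probability-one event. A monotone-class / Hahn-decomposition extension, applied first in $B$ (with $A$ fixed) and then in $A$ (by $L^1$ approximation so that $\P(A\cap B\mid \mathcal{G})$ and $\P(A\mid\mathcal{G})$ both converge), promotes the inequality to all $(A,B)\in\cF_{t_J+1}^j\times\cF_{j+n}^\infty$ with $\P(A\mid\mathcal{G})>0$, yielding the desired conditional $\phi$-mixing bound.

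The main obstacle I anticipate is the ratio structure of the $\phi$-coefficient, which makes a naive dominated-convergence or monotone-class step awkward under the ``$\P(A)>0$'' constraint. Switching to the linearized formulation $|\P(A\cap B)-\P(A)\P(B)|\leq \phi\cdot\P(A)$ circumvents this, since both sides are linear in the underlying (conditional) measure and the inequality degenerates to $0\leq 0$ on the nullset $\{\P(A\mid\mathcal{G})=0\}$. A secondary subtlety is to choose the partitions \emph{jointly nested}, so that $\bigvee_k \mathcal{G}_k$ really recovers $\sigma(X_{t_1},\ldots,X_{t_J})$ rather than merely a marginal refinement.
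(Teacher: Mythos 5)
Your proposal is correct in outline but takes a genuinely different route from the paper. The paper restates the claim as a lemma about three random vectors with a joint density and proves it by a Lebesgue-differentiation argument: conditioning on a value $w_1^0$ is approximated by conditioning on small positive-measure sets shrinking to $w_1^0$, for which the set-conditional bound \eqref{eq:cont1} is available, and a differentiation-of-measures theorem (Ash) transfers the bound to $\P_1$-almost every density point at the cost of doubling the constant. You instead discretize the conditioning variables, invoke the finite-state case, and pass to the limit via L\'evy's upward theorem plus a monotone-class/approximation extension. Your route buys generality: nothing in it actually uses absolute continuity (the identity $\bigvee_k\sigma(\bar{X}_{t_1}^{(k)},\ldots,\bar{X}_{t_J}^{(k)})=\sigma(X_{t_1},\ldots,X_{t_J})$ needs only that the dyadic cells generate the Borel $\sigma$-field), so it would extend Proposition \ref{prop:new1} to arbitrary vector-valued $\phi$-mixing sequences; the paper's route is shorter and makes the constant $2\zeta$ explicit. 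One step you should tighten: Proposition \ref{prop:3} as stated bounds the conditional $\phi$-coefficient of the \emph{discretized} sequence, i.e.\ it covers only events $A,B$ measurable with respect to the $\bar{X}_t^{(k)}$, whereas you apply the level-$k$ bound directly to arbitrary $A\in\cF_{t_J+1}^j$ and $B\in\cF_{j+n}^{\infty}$. Either rerun the one-line triangle-inequality proof of Proposition \ref{prop:3} with the conditioning atom $\{\bar{X}_{t_1}^{(k)}=a_1,\ldots,\bar{X}_{t_J}^{(k)}=a_J\}$ (a positive-probability event in $\cF_{-\infty}^{t_J}$) but with $A,B$ ranging over the original $\sigma$-fields, or restrict at level $k$ to events in a fixed level-$k_0$ algebra with $k\geq k_0$, take $k\to\infty$, and fold the passage from $\bigcup_{k_0}\sigma(\bar{X}_t^{(k_0)};\cdot)$ to the full $\sigma$-fields into your final approximation step; both repairs are routine, but as written the step is not literally justified by the cited proposition.
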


For more discussions on the time series satisfying different $\phi$-mixing conditions, we refer the readers to  \cite{bradley2005basic}, \cite{longla2012some}, and references therein. In particular, of note, \cite{longla2012some} showed a variety of stationary copula-based Markov chains satisfy the $\phi$-mixing condition in \eqref{eq:han-2}. 

Secondly, we show that any $m$-dependent sequence $\{X_t\}_{t\in\Z}$ satisfies Assumption {\bf (A2)}. %Actually, it satisfies a stronger version of {\bf (A2)}.

\begin{proposition}
Let $\{X_t\}_{t\in\Z}$ be a sequence of random variables satisfying the $m$-dependence condition. Then, for any integer $J$ such that $1\leq J\leq r-1$ and arbitrary $1\leq t_1<\ldots<t_J\leq T$, conditional on $X_{t_1},\ldots,X_{t_J}$, the sequence $\{X_t\}_{t=t_J+1}^{\infty}$ satisfies the $m$-dependence condition. In other words, for arbitrary $t,s\geq t_J+1$, $X_t$ is conditionally independent of $X_s$ given $X_{t_1},\ldots,X_{t_J}$, as long as $|t-s|>m$.
\end{proposition}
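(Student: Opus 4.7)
The plan is to reduce the conditional independence claim to a one-line application of the fact that if $A \perp (B,C)$ unconditionally, then $A \perp B$ given $C$. Without loss of generality, take $s, t \geq t_J + 1$ with $s < t$ and $t - s > m$. Since $t_1 < \cdots < t_J < s < t$, every index in the collection $\{t_1, \ldots, t_J, s\}$ is at most $s$, and hence at distance strictly greater than $m$ from $t$. By the $m$-dependence of $\{X_t\}_{t\in\Z}$, the sigma-field $\sigma(X_t)$ is therefore independent of $\sigma(X_{t_1}, \ldots, X_{t_J}, X_s)$.

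From this unconditional independence, I would deduce conditional independence of $X_t$ and $X_s$ given $X_{t_1}, \ldots, X_{t_J}$ as follows. Write $A = X_t$, $B = X_s$, and $C = (X_{t_1}, \ldots, X_{t_J})$. Then $A \perp (B, C)$ implies both $A \perp C$ and that for any measurable $E_A, E_B$,
\begin{align*}
\P(A \in E_A,\, B \in E_B \mid C) &= \E[\mathbf{1}\{A \in E_A\} \mathbf{1}\{B \in E_B\} \mid C] \\
&= \P(A \in E_A)\,\P(B \in E_B \mid C) \\
&= \P(A \in E_A \mid C)\,\P(B \in E_B \mid C) \qquad \text{a.s.},
\end{align*}
where the second equality uses $\sigma(A) \perp \sigma(B, C)$ (so $A$ may be pulled out against the conditioning on $C$ and treated as independent of $(B,C)$), and the third uses $A \perp C$. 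This is exactly the statement of conditional independence of $A$ and $B$ given $C$, which yields the desired conclusion $X_t \perp X_s \mid X_{t_1}, \ldots, X_{t_J}$.

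I do not anticipate any genuine obstacle in this argument; the only minor care needed is in justifying the second equality above using regular conditional probabilities on the underlying measurable space, but this is standard since we only need factorization under joint independence of $\sigma(A)$ from $\sigma(B,C)$. The case $s > t$ is symmetric, so the proposition follows.
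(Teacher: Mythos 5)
Your argument is correct and is essentially the paper's own proof: the paper likewise extracts from $m$-dependence the unconditional independence of the ``future'' variable(s) from the block containing the conditioning variables, and then applies its Lemma~\ref{lem:condind} (that $X\perp(Y,Z)$ implies $X\perp Y\mid Z$), which you simply re-derive inline via the conditional-expectation factorization. The only cosmetic difference is that the paper phrases the independence at the level of whole blocks $(X_{t_1},\ldots,X_{t_J+j})$ versus $(X_{t_J+j+m+1},\ldots)$ while you work with the pairwise statement for fixed $s<t$, which is exactly what the proposition's ``in other words'' clause asks for.
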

\begin{proof}
For any integer $J$ such that $1\leq J\leq r-1$ and arbitrary $1\leq t_1<\ldots<t_J\leq T$, due to the $m$-dependence condition for $\{X_t\}_{t\in\Z}$, we have, for all $j\geq1$, $(X_{t_1},\ldots,X_{t_J}, X_{t_J+1},\ldots,X_{t_J+j})$ and $(X_{t_J+j+m+1},\ldots)$ are independent. Using Lemma \ref{lem:condind}, we conclude that, conditional on $X_{t_1},\ldots,X_{t_J}$,  the sequence $\{X_t\}_{t=t_J+1}^{\infty}$ satisfies the $m$-dependence condition. 
\end{proof}

\subsection{Applications to high-dimensional statistical inference}

This section considers two specific examples of U-statistics, Kendall's tau and Spearman's rho, that have been heavily exploited in robust inference of large graphical models \citep{liu2012high}, covariance matrices \citep{zhao2014positive}, and transition matrices in copula-based Markov chains \citep{han2015rate}.
%\begin{example}[Gini's mean difference]
%For a given data sequence $X_1,\ldots, X_T\in\reals$, the Gini's mean difference is defined as 
%\[
%G_2(X_1,\ldots,X_T):=\frac{2}{n(n-1)}\sum_{i<i'} |X_i-X_{i'}|.
%\]
%It is a widely used measurement of variation, proves to be an robust as well as efficient alternative to the sample standard deviation \citep{tukey1960survey}, and is by nature a U-statistic of order 2. Accordingly, in time series analysis, we could similarly define the refined Gini's mean difference as
%\[
%\overline G_2(X_1,\ldots,X_T):=\frac{1}{K}\sum_{k=1}^K\frac{1}{L_k-1}\sum_{l=1}^{L_k-1}\big|X_l^{(k)}-X_{l+1}^{(k)})\big|.
%\] 
%\end{example}
\begin{example}[Kendall's tau] For a given data sequence $\bX_1,\ldots,\bX_T\in\reals^2$ of $\bX_{t}:=(X_{t1},X_{t2})^\T$, the Kendall's tau correlation coefficient is defined as 
\[
\tau_2(\bX_1,\ldots, \bX_T):=\frac{2}{T(T-1)}\sum_{t<t'}\sign(X_{t1}-X_{t'1})\sign(X_{t2}-X_{t'2}).
\]
It is a U-statistic of order 2. 
\end{example}
\begin{example}[Spearman's rho] Spearman's rho is defined as the correlations of the ranks of the data. It is not a U-statistic. However, by \cite{hoeffding1948class}, we can rewrite the Spearman's rho correlation coefficient estimator $\rho$ as follows:
\begin{align}\label{eq:rho}
\rho=\frac{T-2}{T+1}\cdot\underbrace{\frac{3}{T(T-1)(T-2)}\sum_{t\ne t'\ne t''}\sign(X_{t1}-X_{t'1})\sign(X_{t2}-X_{t''2})}_{\rho_3(\bX_1,\ldots,\bX_T)}+\frac{3\tau_2(\bX_1,\ldots,\bX_T)}{T+1},
\end{align}
where $\rho_3(\bX_1,\ldots,\bX_T)$ is a U-statistic of order 3 and an asymmetric kernel function, which could be rewritten as a U-statistic of a symmetric kernel \citep{hoeffding1963probability}. 
\end{example}

There are immediate consequences of Theorem \ref{thm:main}. For example, given a high dimensional time series $\bX_1,\ldots,\bX_T\in\reals^p$, we define the Kendall's tau and Spearman's rho correlation matrix estimators as $\hat\Tb=[\tau_{jk}]$ and $\hat\Sbb=[\rho_{jk}]$ with $\tau_{jk}$ and $\rho_{jk}$ defined as above. For chatacterizing the impact of dimensionality in analysis,  we allow the dimension $p=p_T$ to increase with the time series length $T$.

Let $\Tb$ and $\Sbb$ be the population counterparts to $\hat\Tb$ and $\hat\Sbb$ under independence. The following corollary quantifies the largest marginal deviations of $\hat\Tb$ and $\hat\Sbb$ to $\Tb$ and $\Sbb$. 

\begin{corollary}\label{cor:example} Suppose that the conditions in Theorem \ref{thm:main} hold for any times series $\{(\bX_t)_{\{j,k\}}\in\reals^2\}_{t\in \Z}$ of $j\ne k\in [p]$. We then have, 
\[
\norm{\hat\Tb-\Tb}_{\max}=O_P(\sqrt{\log (Tp) /T})~~{\rm and}~~\norm{\hat\Sbb-\Sbb}_{\max}=O_P(\sqrt{\log (Tp)/T}).
\]
\end{corollary}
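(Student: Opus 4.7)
The plan is a pairwise application of Theorem \ref{thm:main} followed by a union bound over the $\binom{p}{2}$ coordinate pairs. Fix $j\neq k\in[p]$ and consider the bivariate series $\bY_t:=(X_{tj},X_{tk})^\T$. The corollary's hypothesis guarantees that $\{\bY_t\}_{t\in\Z}$ satisfies Assumptions \textbf{(A1)}--\textbf{(A3)}, so Theorem \ref{thm:main} applies directly. For Kendall's tau, the kernel $h((x,y),(x',y'))=\sign(x-x')\sign(y-y')$ is symmetric of order $r=2$ with $M=1$, and $\theta(h)=T_{jk}$ by definition of $\Tb$ as the population counterpart under independence. Theorem \ref{thm:main} then yields
\[
\P\Big(|\hat\tau_{jk}-T_{jk}|\geq C_4/\sqrt{T}+x\Big)\leq 2\exp\Big(-\frac{C_5 x^2 T}{1+x(\log T)(\log\log 4T)}\Big).
\]
Setting $x=x_T:=C'\sqrt{\log(Tp)/T}$ for a sufficiently large constant $C'$, and observing that in the regime $\log p=o(T/(\log T)^2(\log\log T)^2)$ the denominator is dominated by the constant $M^2=1$, the right-hand side is bounded by $2(Tp)^{-c}$ for some $c>0$. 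A union bound over the $\leq p^2$ index pairs gives
\[
\P\Big(\|\hat\Tb-\Tb\|_{\max}\geq C_4/\sqrt{T}+x_T\Big)\leq 2p^2(Tp)^{-c}\longrightarrow 0,
\]
and since both $C_4/\sqrt{T}$ and $x_T$ are $O(\sqrt{\log(Tp)/T})$, the first half of the corollary follows.

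For Spearman's rho, I would use the decomposition \eqref{eq:rho}. The kernel defining $\rho_3^{(jk)}$ is bounded by $1$ and, after the symmetrization procedure of \cite{hoeffding1963probability}, becomes a symmetric kernel of order $r=3$ to which Theorem \ref{thm:main} applies with $M=1$. Let $S_{jk}$ denote the population Spearman's rho (the expectation of $\rho_3^{(jk)}$ under independence). Then writing
\[
\hat\rho_{jk}-S_{jk}=\tfrac{T-2}{T+1}\big(\rho_3^{(jk)}-S_{jk}\big)+\tfrac{3}{T+1}\big(\tau_2^{(jk)}-T_{jk}\big)+\underbrace{\big(\tfrac{T-2}{T+1}-1\big)S_{jk}+\tfrac{3}{T+1}T_{jk}}_{=O(1/T)},
\]
the deterministic remainder is $O(1/T)=o(\sqrt{\log(Tp)/T})$, and the two stochastic pieces are each controlled by Theorem \ref{thm:main} exactly as in the Kendall's tau argument. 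A union bound over pairs completes the Spearman half.

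The only nontrivial step is verifying that the choice $x_T\asymp\sqrt{\log(Tp)/T}$ lies in the sub-Gaussian branch of the Bernstein bound, i.e.\ that $x_T(\log T)(\log\log T)\lesssim 1$, which is where an implicit growth restriction on $p$ relative to $T$ enters; once this is in hand, the remainder is a routine union-bound calculation and the bias term $C_4 M/\sqrt{T}$ from Theorem \ref{thm:main} is automatically absorbed into the target rate. I do not anticipate any genuinely hard step beyond tracking constants and the minor bookkeeping of symmetrizing the order-$3$ Spearman kernel.
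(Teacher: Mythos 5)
Your proposal is correct and follows essentially the same route as the paper: a pairwise application of Theorem \ref{thm:main} (with the order-$2$ Kendall kernel and the symmetrized order-$3$ Spearman kernel, both bounded), followed by a union bound over the $O(p^2)$ coordinate pairs with $x\asymp\sqrt{\log(Tp)/T}$. The only cosmetic difference is that the paper disposes of the $3\tau_2/(T+1)$ term in \eqref{eq:rho} by the deterministic bound $3/T$ rather than by a second concentration argument, and your explicit remark that the chosen $x$ must lie in the sub-Gaussian branch (an implicit mild growth restriction on $p$) is a point the paper's proof leaves tacit.
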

\begin{proof}
We only prove the case for Spearman's rho since the proof for Kendall's tau is similar. For showing $\norm{\hat\Sbb-\Sbb}_{\max}=O_P(\sqrt{\log (Tp)/T})$, we employ the union bound argument. In detail, for any $x>0$, we have 
\begin{align}\label{eq:newhan-4}
\P(\norm{\hat\Sbb-\Sbb}_{\max}>x+C/\sqrt{T})\leq \frac{p(p-1)}{2}\max_{j,k}\P\Big(|\hat\Sbb_{jk}-\Sbb_{jk}|>x+C/\sqrt{T}\Big).
\end{align}
Notice that: (i) in \eqref{eq:rho}, $|3\tau_2((\bX_1)_{\{j,k\}},\ldots,(\bX_T)_{\{j,k\}})/(T+1)| \leq 3/T$; (ii) $\rho_3((\bX_1)_{\{j,k\}},\ldots,(\bX_T)_{\{j,k\}})$ has a kernel upper bounded by an absolute constant $M_0$. By picking $C$ in \eqref{eq:newhan-4} large enough, Theorem \ref{thm:main} then yields
\[
\P(\norm{\hat\Sbb-\Sbb}_{\max}>x+C/\sqrt{T})\leq p^2\exp\Big(-\frac{C'x^2T}{M_0^2+M_0x(\log T)(\log\log 4T)}\Big).
\]
Picking $x=(3M_0^2\log (Tp)/(C'T))^{1/2}$, we have $\P(\norm{\hat\Sbb-\Sbb}_{\max}>x+C/\sqrt{T})\to 0$, which completes the proof.
\end{proof}

Results in Corollary \ref{cor:example} are key in high dimensional statistical inference. It forms the basis for studying sparse matrix \citep{bickel2008covariance},  sparse inverse matrix \citep{ravikumar2011high}, sparse principal component \citep{yuan2013truncated}, sparse transition matrix \citep{han2015rate} estimators, to just name a few.

\section{Proofs}

%The proof of the main theorem is separated to two parts. In the first part, we prove the bias term. The second part proves the variance part. 

Before proving Theorem \ref{thm:main}, let's first denote the expectation of $U_r(X_1,\ldots,X_T)$ as 
\begin{align*}
\theta^*(h)=\binom{T}{r}^{-1}\sum_{1\leq t_1<\ldots<t_r\leq T}\E[h(X_{t_1},\ldots,X_{t_r})].
\end{align*}
By simple observation, we have $\theta^*(h)=\theta(h)$ under data independence assumption. However, when $X_1,\ldots,X_T$ are not mutually independent, $\theta^*(h)$ is not necessarily equal to $\theta(h)$. Therefore, we decompose $U_r(X_1,\ldots,X_T)-\theta(h)$ into two parts:
\begin{align}\label{eq:decomp}
U_r(X_1,\ldots,X_T) -\theta(h) = \underbrace{U_r(X_1,\ldots,X_T)-\theta^*(h)}_{\rm variance~term}+\underbrace{\theta^*(h)-\theta(h)}_{\rm bias~term}.
\end{align}
In the following, we separately bound the variance and bias terms.

\subsection{Bounding the variance term}

For bounding the variance term in \eqref{eq:decomp}, we need to carefully decouple the terms $\{h(X_{t_1},\ldots, X_{t_r}); 1\leq t_1<\cdots<t_r\leq T\}$. We first state the main theorem as follows. Of note, Theorem \ref{thm:variance} holds without requiring the time series to be strictly stationary. 

\begin{theorem}[Variance] \label{thm:variance}
Assume Assumptions {\bf (A2)} and {\bf (A3)} hold. We then have, there exist absolute positive constants $C_6, C_7$, only depending on $\gamma$ and $r$, such that for  arbitrary $0<\eta<(C_6M T^{-1}(\log T)(\log\log 4T))^{-1}$, 
\begin{align*}
&\log\E(\exp[\eta\{U_r(X_1,\ldots,X_T)-\theta^*(h)\}])\leq \frac{C_7\eta^2M^2 T^{-1}}{1-C_6\eta M T^{-1}(\log T)(\log\log 4T)}.  
\end{align*}
In terms of probabilities, there exists an absolute constant $C_5$, defined in Theorem \ref{thm:main}, depending only on $\gamma$ and $r$, such that for all $x\geq 0$ and $T$ sufficiently large, we have
\begin{align*}
\P(|U_r(X_1,\ldots,X_T)-\theta^*(h)|\geq x)\leq 2\exp\Big(-\frac{C_5x^2T}{M^2+Mx(\log T)(\log\log 4T)}\Big).
\end{align*}
\end{theorem}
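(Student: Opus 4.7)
The plan is to bound the moment generating function $\E\exp(\eta(U_r(X_1,\ldots,X_T)-\theta^*(h)))$ for $\eta$ in a suitable range, and then apply a Chernoff optimization to deduce the tail bound. The key novelty is a time-ordered decomposition of the kernel that is aligned with the conditional mixing structure in Assumption~(A2), followed by an iterated application of the Merlev\`ede--Peligrad Bernstein inequality (Proposition~\ref{prop:2}).

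First, for each time-ordered tuple $t_1<\cdots<t_r$ I would introduce the sequential conditional-expectation chain
\[
\varphi_j(X_{t_1},\ldots,X_{t_j}) \ := \ \E\big[h(X_{t_1},\ldots,X_{t_r}) \,\big|\, X_{t_1},\ldots,X_{t_j}\big],\qquad j=0,1,\ldots,r,
\]
with $\varphi_0=\E h(X_{t_1},\ldots,X_{t_r})$, and the associated ``martingale-style'' increments $\Delta_j=\varphi_j-\varphi_{j-1}$. Telescoping gives $h(X_{t_1},\ldots,X_{t_r})-\E h(X_{t_1},\ldots,X_{t_r})=\sum_{j=1}^r \Delta_j$, and summing over all ordered tuples splits $U_r(X_1,\ldots,X_T)-\theta^*(h)$ into $r$ components $V_1,\ldots,V_r$, the $j$-th component being a sum involving only the earliest $j$ coordinates of each tuple (with the remaining coordinates averaged out). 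It suffices to bound the MGF of each $V_j$ separately.

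Second, I would peel coordinates off of $V_j$ one at a time by iterating Proposition~\ref{prop:2} under conditioning. At the outer layer, condition on $X_{t_1},\ldots,X_{t_{j-1}}$; by Assumption~(A2) the top-coordinate sub-series inherits an exponential $\alpha$-mixing rate almost surely, and $\Delta_j$ is bounded by $2M$ by~(A3) and has zero conditional mean. Proposition~\ref{prop:2}, applied conditionally after re-indexing the selected sub-series as a contiguous sum, yields a Bernstein-type MGF bound. Taking outer expectations and repeating at layers $j-1,j-2,\ldots,1$---each time invoking~(A2) with the appropriate $J$---eventually gives an unconditional MGF estimate of the form $\exp\!\big(C_7\eta^2M^2T^{-1}/(1-C_6\eta M T^{-1}(\log T)(\log\log 4T))\big)$ valid for $\eta<(C_6MT^{-1}(\log T)(\log\log 4T))^{-1}$. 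A standard Chernoff optimization over $\eta$ then produces the Bernstein-type probability bound.

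\textbf{Main obstacle.} The technical crux is executing the iterated conditional Bernstein step cleanly. At each of the $r$ layers the ``effective kernel'' is a bounded conditional expectation of $h$, and one must verify that the sub-series of interest---selected by the tuple indices rather than forming a contiguous block of time---still satisfies the hypotheses of Proposition~\ref{prop:2}. This is exactly what~(A2) is designed for: the supremum over the starting time $j$ and the almost-sure uniformity over the conditioning tuple $(X_{t_1},\ldots,X_{t_J})$ together guarantee that the conditional $\alpha$-mixing rate carries through to every sub-series encountered in the iteration. The constants necessarily accumulate across the $r$ layers, giving the $r$-dependence of $C_6, C_7$; a delicate point is to arrange the inductive scaling of $\eta$ so that the logarithmic correction $(\log T)(\log\log 4T)$ persists in the denominator rather than compounding multiplicatively through the iterations.
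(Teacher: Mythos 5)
Your decomposition is exactly the one the paper uses: the telescoping chain $\varphi_0,\ldots,\varphi_r$ of conditional expectations along the time order, with increments $\Delta_j$ having zero conditional mean, boundedness $2M$, and conditional $\alpha$-mixing in the top coordinate, is precisely the paper's splitting of $U_r-\theta^*(h)$ into $S_1,\ldots,S_r$ (your $V_j$ is the paper's $S_{r-j+1}$, with the $A$- and $B$-terms carrying the normalization that keeps the future-averaged summands bounded by $2M$). The single conditional application of Proposition \ref{prop:2} to the sum over the ``present'' index, given the history, is also the paper's Step I.1.

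The gap is in how you aggregate after that one conditional Bernstein step. You propose to ``repeat at layers $j-1,j-2,\ldots,1$,'' i.e.\ to iterate Proposition \ref{prop:2} downward through the coordinates, but this is not an executable step: once you condition on $X_{t_1},\ldots,X_{t_{j-1}}$ and bound the conditional log-MGF of $\sum_{t_j}B_{[t_1:t_r]}^{[t_1:t_{j-1}]}$, that bound is a deterministic constant holding a.s., so the outer expectation is immediate and there is no remaining random sum to which a Bernstein inequality applies. What actually remains is to combine the log-MGF bounds of the $O(T^{j-1})$ \emph{dependent} random variables indexed by the history tuples $(t_1,\ldots,t_{j-1})$ --- and these are not functions of a single coordinate $X_{t_{j-1}}$, so no mixing-based inequality can be invoked for them. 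The paper's tool for this is Lemma \ref{lem:combn} (the sub-additivity of bounds of the form $(\sigma\eta)^2/(1-\kappa\eta)$, from \cite{merlevede2009bernstein}): one tracks the coefficients $\sigma_{t_{r-k}}^{(k)}=(T-t_{r-k}-k+1)^{1/2}\binom{t_{r-k}-1}{r-k-1}$ and $\kappa_{t_{r-k}}^{(k)}$, shows $\sum\sigma_{t_{r-k}}^{(k)}\lesssim T^{r-k+1/2}$ and $\sum\kappa_{t_{r-k}}^{(k)}\lesssim T^{r-k}\log T\log\log 4T$, and after multiplying by the normalization $T^{k-1}\binom{T}{r}^{-1}$ recovers the $M^2/T$ variance proxy and the $T^{-1}(\log T)(\log\log 4T)$ scale; a final application of the same lemma sums the $r$ components. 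This lemma is precisely what resolves the ``delicate point'' you flag about the logarithmic correction not compounding through the combination --- it keeps every intermediate bound in the closed family $(\sigma\eta)^2/(1-\kappa\eta)$ with the $\sigma$'s and $\kappa$'s adding. Without identifying this (or an equivalent) aggregation device, the plan does not close.
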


The proof of Theorem \ref{thm:variance} is lengthy. For better presenting the main idea behind the proof, we first give a very brief proof sketch. Intrinsically, there are two main steps in the proof. The first step is a novel decomposition of the U-statistic $U_r(X_1,\ldots, X_T)$. In detail, we decompose the summation 
\[
\binom{T}{r}^{-1}\sum_{1\leq t_1<\cdots<t_r\leq T} h(X_{t_1},\ldots, X_{t_r})
\]
into many parts. In each part, the randomness is only posed on the ``present time", while the ``history" is conditioned on and the ``future" is integrated out. The aim is to employ the exponential inequality constructed in \cite{merlevede2009bernstein} for each part. 

In the second step, noting that each component in the decomposition has different converging rate to zero, ranging from a constant rate to a fast exponential rate, we summarize these components together using an improved Jensen's inequality. It guarantees the components of slowly decaying rates are asymptotically ignorable. The final rate proves to be analogous to that under the independence setting. 

The detailed proof is as follows.

%The proof of the variance term is relatively lengthy. For better understanding, we first give a proof sketch best presenting the main idea behind the proof. There are two steps in our framework:\\

%(i) We decompose $U_r(X_1,\ldots, X_T)- \theta^*(h)$ into $r$ parts using the idea of Hoeffding decomposition and upper bound the $r$ parts respectively.\\

%(ii) For each part, we fix the past history, summary the future, and construct a random variable 

\begin{proof}
By the definition of $\theta^*(h)$, we have
\begin{align*}
&U_r(X_1,\ldots, X_T)- \theta^*(h)=\\
&\binom{T}{r}^{-1}\sum_{1\leq t_1<\cdots<t_r\leq T}\Big\{h(X_{t_1},\ldots,X_{t_r})-\E[h(X_{t_1},\ldots,X_{t_r})]\Big\}.
\end{align*}
For presentation clearness, for each selected $\{t_1,\ldots,t_r\}$ such that $1\leq t_1<\cdots<t_r\leq T$, in the sequel, we give notation for the following (conditional) expectations:
\begin{align*}
\theta_{[t_1:t_r]}&:=\E\{h(X_{t_1},\ldots,X_{t_r})\},\\
\hat\theta_{[t_1:t_r]}^{[t_1:t_{r-k}]}&:=\E\{h(X_{t_1},\ldots,X_{t_r})|X_{t_1},\ldots,X_{t_{r-k}}\},~~~{\rm for}~~k=1,\ldots,r-1.
\end{align*}
Here $\hat\theta_{[t_1:t_r]}^{[t_1:t_{r-k}]}$ stands for a functional of $X_{t_1},\ldots,X_{t_{r-k}}$. Thus, we can first decompose $U_r(X_1,\ldots, X_T)- \theta^*(h)$ as follows:
\begin{align*}
U_r(X_1,\ldots, X_T)- \theta^*(h)=S_1+S_2+\ldots+S_{r-1}+S_r,
\end{align*}
where $S_1, S_r$ are defined as
\[
S_1:=\binom{T}{r}^{-1}\sum_{1\leq t_1<\cdots<t_r\leq T}\Big\{\big(h(X_{t_1},\ldots,X_{t_r})-\theta_{[t_1:t_r]}\big)-\big(\hat\theta_{[t_1:t_r]}^{[t_1:t_{r-1}]}-\theta_{[t_1:t_r]}\big)\Big\},
\]
and
\[
S_r:=T^{r-1}\cdot\binom{T}{r}^{-1} \sum_{1\leq t_1<\cdots<t_r\leq T}\Big\{T^{-(r-1)}\big(\hat\theta_{[t_1:t_r]}^{[t_1]}-\theta_{[t_1:t_r]}\big)\Big\},
\]
and for $k=2,\ldots,r-1$, $S_k$ is defined as
\[
S_k := T^{k - 1}\binom{T}{r}^{-1}         \sum_{1\leq t_1<\cdots<t_r\leq T}   \Big\{T^{-(k - 1)}\Big
[\big(\hat\theta_{[t_1:t_r]}^{[t_1:t_{r-k+1}]} - \theta_{[t_1:t_r]}\big)-\big(\hat\theta_{[t_1:t_r]}^{[t_1:t_{r-k}]} - \theta_{[t_1:t_r]}\big)\Big]\Big\}.
\]
In the following, we define $\{A_{[t_1:t_r]}^{[t_1:t_{r-k}]}; 1\leq t_1<\cdots<t_r\leq T\}$ to be the terms within each $S_k$ such that 
\[
S_k=T^{k-1}\cdot\binom{T}{r}^{-1}\sum_{1\leq t_1<\cdots<t_r\leq T} A_{[t_1:t_r]}^{[t_1:t_{r-k}]}
\]
for $k=1,\ldots,r-1$. Similarly, we define $\{A_{[t_1:t_r]}^{[\cdot]}; 1\leq t_1<\cdots<t_r\leq T\}$ to be the terms within $S_r$ such that 
\[
S_r=T^{r-1}\cdot\binom{T}{r}^{-1}\sum_{1\leq t_1<\cdots<t_r\leq T}A_{[t_1:t_r]}^{[\cdot]}. 
\]
More specifically, for $2\leq k\leq r-1$, the definitions of $A_{[\cdots]}^{[\cdots]}$ are as follows:
\begin{align*}
&A_{[t_1:t_r]}^{[t_1:t_{r-1}]}:=\big(h(X_{t_1},\ldots,X_{t_r})-\theta_{[t_1:t_r]}\big)-\big(\hat\theta_{[t_1:t_r]}^{[t_1:t_{r-1}]}-\theta_{[t_1:t_r]}\big),\\
&A_{[t_1:t_r]}^{[t_1:t_{r-k}]}:=T^{-(k - 1)}\cdot\Big(\big(\hat\theta_{[t_1:t_r]}^{[t_1:t_{r-k+1}]} - \theta_{[t_1:t_r]}\big) - \big(\hat\theta_{[t_1:t_r]}^{[t_1:t_{r-k}]} - \theta_{[t_1:t_r]}\big)\Big),\\
{\rm and}~~~&A_{[t_1:t_r]}^{[\cdot]}:=T^{-(r - 1)}\cdot\big(\hat\theta_{[t_1:t_r]}^{[t_1]}-\theta_{[t_1:t_r]}\big).
\end{align*}
We further define
\begin{align*}
&B_{[t_1:t_r]}^{[t_1:t_{r-1}]}:=A_{[t_1:t_r]}^{[t_1:t_{r-1}]},\\
&B_{[t_1:t_r]}^{[t_1:t_{r-k}]}:= \sum_{t_{r-k+2}=t_{r-k+1}+1}^{T-k+2}\ldots\sum_{t_r=t_{r-1}+1}^T  A_{[t_1:t_r]}^{[t_1:t_{r-k}]},~~{\rm for}~k=2,\ldots,r-1,\\
{\rm and}~~~&B_{[t_1:t_r]}^{[\cdot]}:=\sum_{t_2=t_1+1}^{T-r+2}\ldots\sum_{t_r=t_{r-1}+1}^T A_{[t_1:t_r]}^{[\cdot]}.
\end{align*}
Note, conditional on $X_{t_1},\ldots,X_{t_{r-1}}$, $B_{[t_1:t_r]}^{[t_1:t_{r-1}]}$ is only a random variable as a function of $X_{t_r}$. For $k=2,\ldots,r-1$, conditional on $X_{t_1},\ldots,X_{t_{r-k}}$, $B_{[t_1:t_r]}^{[t_1:t_{r-k}]}$ is only a random variable with regard to $X_{t_{r-k+1}}$. And in the end, $B_{[t_1:t_r]}^{[\cdot]}$ is a random variable only relevant to $X_{t_1}$. 

With the above definitions, for $1\leq k\leq r-1$, we can reorganize the summation in $S_k$ as follows:
\begin{align}\label{eq:han-1}
S_k&=T^{k-1} \binom{T}{r}^{- 1}   \sum_{t_{r - k}=r - k}^{T-k}\Big\{   \sum_{1\leq t_1<\cdots<t_{r-k-1}\leq t_{r - k} - 1}  \Big( \sum_{t_{r - k+1}=t_{r-k}+1}^{T-k+1} B_{[t_1:t_r]}^{[t_1:t_{r-k}]}\Big)\Big\}, \\
~{\rm and}~~S_r&=T^{r-1}\binom{T}{r}^{-1}\sum_{t_1=1}^{T-r+1}B_{[t_1:t_r]}^{[\cdot]}. \notag
\end{align}
A further observation verifies the following three properties:
 
 {\bf Property (P1).} We have the following (conditional) expectations are all equal to zero almost surely:
\begin{align*}
\E\Big(B_{[t_1:t_r]}^{[t_1:t_{r-k}]}\Big|X_{t_1},\ldots,X_{t_{r-k}}\Big)&=0, ~~~{\rm for}~~k=1,\ldots,r-1,\\
\E\Big(B_{[t_1:t_r]}^{[\cdot]}\Big)&=0.
\end{align*}

{\bf Property (P2).} Using Assumption {\bf (A3)}, we have $B_{[\cdots]}^{[\cdots]}$ are all bounded:
\begin{align*}
\Big|B_{[t_1:t_r]}^{[t_1:t_{r-k}]}\Big|&\leq 2M, ~~~{\rm for}~~k=1,\ldots,r-1,\\
~{\rm and}~~~\Big|B_{[t_1:t_r]}^{[\cdot]}\Big|&\leq2M.
\end{align*}

{\bf Property (P3).} For fixed $k\in\{1,\ldots,r-1\}$, fixing $t_1,\ldots,t_{r-k}$, conditional on $X_{t_{1}},\ldots,$ $X_{t_{r-k}}$, using Assumption {\bf (A2)}, we have the sequence
\[
\Big\{B_{[t_1:t_r]}^{[t_1:t_{r-k}]}; t_{r-k+1}=t_{r-k}+1,\ldots, T-k+1\Big\}
\]
satisfies the $\alpha$-mixing condition with
\begin{align*}
\alpha(n; X_{t_1},\ldots,X_{t_{r-k}}) \lesssim \exp(-\gamma n), ~~{\rm a.s.}.
\end{align*}
Similarly, we have $\Big\{B_{[t_1:t_r]}^{[\cdot]}\Big\}_{t_1=1}^{T-r+1}$ satisfies the $\alpha$-mixing condition with
\begin{align*}
\alpha(n) \lesssim \exp(-\gamma n).
\end{align*}

Employing the above three properties, we then turn to bound the logarithmic Laplace transform, $\log\E\exp(\eta S_k)$, for $k=1,\ldots,r$. We separate the proof into two parts. The first part is focused on the cases $k=1,\ldots,r-1$. The second part is focused on the case $k=r$. %In the sequel, though for presentation clearness sometime still listed, we ignore the summations that involve less than 4 terms, since it is easy to verify they are asymptotically ignorable.

{\bf Step I.} First, for $k=1,\ldots,r-1$, we divide the proof into three steps.

{\bf Step I.1.} Reminding the decomposition of $S_k$ in \eqref{eq:han-1}, we first aim at bounding the term 
\[
\sum_{t_{r-k+1}=t_{r-k}+1}^{T-k+1}B_{[t_1:t_r]}^{[t_1:t_{r-k}]}
\]
within each $S_k$, via first fixing the history of $X_{t_1},\ldots, X_{t_{r-k}}$. Also note 
\[
\sum_{t_{r-k+1}=t_{r-k}+1}^{T-k+1}B_{[t_1:t_r]}^{[t_1:t_{r-k}]} 
\]
has integrated out the ``future" of $X_{t_{r-k+2}},\ldots, X_{t_r}$. Using the law of iterated expectations, we have
\begin{align*}
&\log\E\Big[\exp\Big(\eta \sum_{t_{r-k+1}=t_{r-k}+1}^{T-k+1}B_{[t_1:t_r]}^{[t_1:t_{r-k}]}\Big)\Big]\\
=&\log\E\Big[\E\Big[\exp\Big(\eta \sum_{t_{r-k+1}=t_{r-k}+1}^{T-k+1}B_{[t_1:t_r]}^{[t_1:t_{r-k}]}\Big)\Big|X_{t_1},\ldots,X_{t_{r-k}}\Big]\Big].
\end{align*}
Exploiting Properties {\bf (P1)}, {\bf (P2)}, {\bf (P3)}, and Proposition \ref{prop:2}, for all $\eta$ satisfying 
\[
0<\eta<\Big[2C_1 M\log(T-t_{r-k}-k+1)\log\log4(T-t_{r-k}-k+1)\Big]^{-1},
\]
we have 
\begin{align*}
&\log\E\Big[\exp\Big(\eta \sum_{t_{r-k+1}=t_{r-k}+1}^{T-k+1}B_{[t_1:t_r]}^{[t_1:t_{r-k}]}\Big)\Big]\\
 \leq&\log\E\Big[\exp\Big(\frac{4C_2\eta^2M^2(T - t_{r-k} - k + 1)}{1 - 2C_1\eta M\log(T - t_{r-k} - k + 1)\log\log4(T - t_{r-k} - k + 1)}\Big)\Big]\\
 =&\frac{4C_2\eta^2M^2(T - t_{r-k} - k + 1)}{1 - 2C_1\eta M\log(T - t_{r-k} - k + 1)\log\log4(T - t_{r-k} - k + 1)}.
\end{align*} 

{\bf Step I.2.} Secondly, we try to bound the summation over different sequences of the history $t_1,\ldots, t_{r-k-1}$ for $\sum_{t_{r - k + 1}=t_{r - k} + 1}^{T - k + 1}B_{[t_1:t_r]}^{[t_1:t_{r - k}]}$. For this, using the improved Jensen's inequality in  Lemma \ref{lem:combn},  for all $\eta$ satisfying 
\[
0<\eta<\Big[2C_1 M\log(T-t_{r-k}-k+1)\log\log4(T-t_{r-k}-k+1)\cdot\binom{t_{r-k}-1}{r-k-1}\Big]^{-1},
\]
we have
\begin{align}\label{eq:newhan-1}
&\log\E\Big[\exp\Big(\eta \sum_{1\leq t_1<\cdots<t_{r-k-1}\leq t_{r-k}-1}\sum_{t_{r-k+1}=t_{r-k}+1}^{T-k+1}B_{[t_1:t_r]}^{[t_1:t_{r-k}]}\Big)\Big]\notag\\
\leq&\frac{4C_2\eta^2M^2(T - t_{r - k} - k + 1)\binom{t_{r-k}-1}{r-k-1}^2}{1 - 2C_1\eta M\log(T-t_{r-k}-k+1)\log\log4(T-t_{r-k}-k+1)\cdot\binom{t_{r-k}-1}{r-k-1}}.
\end{align}

{\bf Step I.3.} Finally, we turn to analyze the summation over all different choices of $t_{r-k}$ and derive a bound of $\log\E\exp(\eta S_k)$. %To this end, we use Lemma \ref{lem:combn} to bound $\log\E\exp(\eta S_k)$. 
Equation \eqref{eq:han-1} has shown that $$T^{-k+1}\binom{T}{r}\cdot S_k$$ is a summation over the items in \eqref{eq:newhan-1}. Accordingly, using Lemma \ref{lem:combn} and the bound in \eqref{eq:newhan-1} suffices to obtain the desired result. 
%This is via employing the relation between $S_k$ and the items in \cite{}

In detail, let's define $\sigma_{t_{r-k}}^{(k)}$ for $k=1,\ldots,r-1$ and $t_{r-k}=r-k,\ldots,T-k$ to be
\begin{align*}
\sigma_{t_{r-k}}^{(k)}=(T-t_{r-k}-k+1)^{\frac{1}{2}}\binom{t_{r-k}-1}{r-k-1}.
\end{align*}
Similarly, we define $\kappa_{t_{r-k}}^{(k)}$ for $k=1,\ldots,r-1$ and $t_{r-k}=r-k,\ldots,T-k$ to be
\begin{align*}
\kappa_{t_{r-k}}^{(k)}=\log(T-t_{r-k}-k+1)\log\log4(T-t_{r-k}-k+1)\cdot\binom{t_{r-k}-1}{r-k-1}.
\end{align*}
The numbers $\sigma_{t_{r-k}}^{(k)}$ and $\kappa_{t_{r-k}}^{(k)}$ correspond to the coefficients in \eqref{eq:newhan-1} that change with the value of $t_{r-k}$. We define $\sigma^{(k)}$ for $k=1,\ldots,r-1$ as follows 
\begin{align*}
\sigma^{(k)}&=\sum_{t_{r-k}=r-k}^{T-k}\sigma_{t_{r-k}}^{(k)},
\end{align*}
and it has the upper bound
\begin{align}\label{eq:newhan-2}
\sum_{t_{r-k}=r-k}^{T-k}\sigma_{t_{r-k}}^{(k)}\lesssim\sum_{t_{r-k}=r-k}^{T-k}t_{r-k}^{r-k-1}T^{\frac{1}{2}}\lesssim\int_{0}^T x^{r-k-1}T^{\frac{1}{2}} dx\lesssim T^{r-k+\frac{1}{2}}.
\end{align}

Similarly, we define $\kappa^{(k)}$ for $k=1,\ldots,r-1$ as follows
\begin{align*}
\kappa^{(k)}& = \sum_{t_{r - k}=r - k}^{T- k}\kappa_{t_{r-k}}^{(k)},
\end{align*}
and it has the upper bound
\begin{align}\label{eq:newhan-3}
\sum_{t_{r-k}=r-k}^{T-k}\kappa_{t_{r-k}}^{(k)} &\lesssim \sum_{t_{r-k}=r-k}^{T-k}t_{r-k}^{r-k-1}\log T\log\log 4T\notag\\
&\lesssim \int_{0}^T x^{r-k-1}\log T\log\log 4T dx \lesssim T^{r-k}\log T\log\log 4T.
\end{align}
 
Viewing $\sigma_{t_{r-k}}^{(k)}$ and $\kappa_{t_{r-k}}^{(k)}$ (for $t_{r-k}$ from $r-k$ to $T-k$) as the $\sigma_i$ and $\kappa_i$ (for $i$ changing from $1$ to $T-r+1$) in Lemma \ref{lem:combn}, employing the bounds \eqref{eq:newhan-2} and \eqref{eq:newhan-3}, we deduce that there exist absolute constants $C_{21}$ and $C_{22}$ only depending on $\gamma$ and $r$ such that, for all $\eta$ satisfying 
\[
0<\eta<\big[C_{22} M T^{-1}(\log T)(\log\log 4T)\big]^{-1},
\]
we have, for $k=1,\ldots,r-1$,
\begin{align*}
\log\E\exp(\eta S_k)\lesssim&\frac{4C_2\eta^2M^2 T^{2k-2}\binom{T}{r}^{-2}T^{2r-2k+1}}{1-2C_{21}\eta MT^{k-1}\binom{T}{r}^{-1}T^{r-k}(\log T)(\log\log 4T)}\\
\lesssim&\frac{\eta^2M^2 T^{-1}}{1-C_{22}\eta M T^{-1}(\log T)(\log\log 4T)}.
\end{align*}
~\\

{\bf Step II.} Secondly, we focus on the case $k=r$. First we use the decomposition for $S_r$ to deduce
\begin{align*}
\log\E\exp(\eta S_r)=\log\E\exp\Big[\eta T^{r-1}\cdot\binom{T}{r}^{-1}\sum_{t_1=1}^{T-r+1}B_{[t_1:t_r]}^{[\cdot]}\Big].
\end{align*}
According to Lemma \ref{lem:combn} and Properties {\bf P1}, {\bf P2}, {\bf P3}, we deduce that there exists an absolute constant $C_{23}$ only depending on $\gamma$ and $r$ such that, for all $\eta$ satisfying 
\[
0<\eta<\big[C_{23} M T^{-1}(\log T)(\log\log 4T)\big]^{-1},
\]
we have
\begin{align*}
\log\E\exp(\eta S_r)\lesssim&\frac{4C_2\eta^2 T^{2r-2}\binom{T}{r}^{-2}(T - r + 1)}{1-2C_1\eta M\binom{T}{r}^{-1}T^{r-1}\log(T - r + 1)\log\log4(T - r + 1)}\\
 \lesssim&\frac{\eta^2M^2 T^{-1}}{1-C_{23}\eta M T^{-1}(\log T)(\log\log 4T)}.
\end{align*}
This completes the proof of the second step. 
~\\

In the end, we summarize the previous results to finalize the proof. According to the fact that
\begin{align*}
\log\E\Big[\exp\Big(\eta(U_r(X_1,\ldots,X_T)-\theta^*(h))\Big)\Big]=\log\E\Big[\exp\Big(\eta\sum_{k=1}^r S_k\Big)\Big],
\end{align*}
and the upper bounds for $\log\E\exp(\eta S_k)$, $k=1,\ldots,r$, we employ Lemma \ref{lem:combn} to conclude that there exists a positive constant $C_6$ only depending on $\gamma$ and $r$ such that, for all $\eta$ satisfying
\[
0<\eta<\big[C_6M T^{-1}(\log T)(\log\log 4T)\big]^{-1},
\]
we have
\begin{align*}
\log\E\Big[\exp\big(\eta(U_r(X_1,\ldots,X_T)-\theta^*(h))\big)\Big]\lesssim \frac{\eta^2M^2 T^{-1}}{1-C_6\eta M T^{-1}(\log T)(\log\log 4T)}.
\end{align*}

In terms of probabilities, there exists an absolute constant $C_5>0$, depending only on $\gamma$ and $r$, such that for all $x\geq 0$ and $T$ sufficiently large, we have
\begin{align*}
\P(|U_r(X_1,\ldots,X_T)-\theta^*(h)|\geq x)\leq 2\exp\Big(-\frac{C_5x^2T}{M^2+Mx(\log T)(\log\log 4T)}\Big).
\end{align*}

This completes the proof.
\end{proof}

\subsection{Bounding the bias term}
\begin{theorem}[Bias term]\label{thm:bias} Under Assumptions {\bf (A1)} and {\bf (A3)}, we have
\[
|\theta^*(h)-\theta(h)|=O(M/\sqrt{T}).
\]
\end{theorem}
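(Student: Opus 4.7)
The plan is to bound the bias tuple-by-tuple using a $\beta$-mixing coupling argument, and then average the result over all $r$-tuples $1 \leq t_1 < \cdots < t_r \leq T$. By strict stationarity under (A1), the marginal distribution $\P_{X_t}$ is independent of $t$, so
\[
\theta(h) = \int h(x_1,\ldots,x_r)\, d\big(\P_{X_{t_1}} \otimes \cdots \otimes \P_{X_{t_r}}\big)
\]
for every tuple, and $|\E h(X_{t_1},\ldots,X_{t_r}) - \theta(h)|$ is precisely the discrepancy, against $h$, between the joint law of $(X_{t_1},\ldots,X_{t_r})$ and the product of its marginals.

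The first step is to establish, for any $1 \leq t_1 < \cdots < t_r \leq T$, the coupling bound
\[
\bigl|\E h(X_{t_1},\ldots,X_{t_r}) - \theta(h)\bigr| \,\leq\, 2M \sum_{i=1}^{r-1} \beta(t_{i+1}-t_i).
\]
The paper's definition of $\beta(\cA,\cB;\P)$ is exactly the total-variation distance between the joint law on $\cA \vee \cB$ and the product of marginals $\P|_{\cA} \otimes \P|_{\cB}$. I would telescope from the full joint to the full product by successively inserting the hybrid measures $\P_{(X_{t_1},\ldots,X_{t_i})} \otimes \P_{X_{t_{i+1}}} \otimes \cdots \otimes \P_{X_{t_r}}$ for $i = r-1, r-2, \ldots, 1$; the triangle inequality bounds the TV distance by $\sum_{i=1}^{r-1}\beta(\sigma(X_{t_1},\ldots,X_{t_i}),\sigma(X_{t_{i+1}}))$. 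Each such term is at most $\beta(t_{i+1}-t_i)$ since the two sigma fields lie in $\cF_{-\infty}^{t_i}$ and $\cF_{t_{i+1}}^{\infty}$ respectively. Finally, Assumption (A3) converts this TV bound to the inequality above through the pairing with $h$ and the factor $2M$.

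The second step is a combinatorial averaging. For each fixed $i$, a Vandermonde identity gives that the number of ordered $r$-tuples with $t_{i+1}-t_i = d$ equals $\binom{T-d}{r-1}$, hence
\[
\binom{T}{r}^{-1} \sum_{1\leq t_1<\cdots<t_r\leq T} \beta(t_{i+1}-t_i) \,\leq\, \frac{r}{T}\sum_{d=1}^{T-1}\beta(d).
\]
Under (A1) with $\beta(d) \lesssim d^{-\delta}$ and $\delta \geq 1$, one has $\sum_{d=1}^{T-1}\beta(d) = O(\log T)$ (or $O(1)$ when $\delta>1$). Summing over $i=1,\ldots,r-1$ and combining with the first step yields $|\theta^*(h)-\theta(h)| \lesssim M\log T / T = O(M/\sqrt{T})$ for $T$ sufficiently large.

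The only genuinely nonroutine ingredient is the $\beta$-as-TV identity together with its iterative telescoping; both are classical and read off directly from the paper's definition of $\beta(\cA,\cB;\P)$. Everything else is counting plus the observation that $\delta \geq 1$ is already enough to push the averaged bias below $O(M/\sqrt{T})$ by a logarithmic factor, which is why the modest exponent $\delta \geq 1$ in (A1) suffices.
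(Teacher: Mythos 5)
Your proof is correct and follows essentially the same route as the paper, which omits the argument and defers to Yoshihara (1976): your telescoping total-variation bound $|\E h(X_{t_1},\ldots,X_{t_r})-\theta(h)|\leq 2M\sum_{i=1}^{r-1}\beta(t_{i+1}-t_i)$ is exactly Yoshihara's Lemma~1 specialized to bounded kernels, and the Vandermonde gap count with $\delta\geq 1$ gives the $O(M\log T/T)=O(M/\sqrt{T})$ conclusion. The details you supply (the $\beta$-as-TV identity, the hybrid-measure telescoping, and the count $\binom{T-d}{r-1}$ of tuples with $t_{i+1}-t_i=d$) all check out.
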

\begin{proof}
The proof is largely the same to \cite{yoshihara1976limiting}, and is accordingly omitted. 
\end{proof}

\appendix

\section{Proof of Proposition \ref{prop:3}}

\begin{proof}
Since, for all $n\geq 1$, $j\geq1$, and any subset $S_t\subset G_t$, we have
\begin{align*}
&\Big|\P(X_{t_J+j+n}\in S_{t_J+j+n},\ldots|X_{t_1}\in S_{t_1},\ldots,X_{t_J}\in S_{t_J},X_{t_J+1}\in S_{t_J+1},\ldots,\\
&~~~~~~X_{t_J+j}\in S_{t_J+j})-\P(X_{t_J+j+n}\in S_{t_J+j+n},\ldots|X_{t_1} \in  S_{t_1},\ldots,X_{t_J}\in S_{t_J})\Big|\\
\leq&\Big|\P(X_{t_J+j+n} \in  S_{t_J+j+n}, \ldots |X_{t_1} \in S_{t_1}, \ldots ,X_{t_J} \in S_{t_J},X_{t_J+1} \in S_{t_J+1},\ldots,\\
&~~~~~~X_{t_J+j}\in S_{t_J+j})-\P(X_{t_J+j+n} \in  S_{t_J+j+n},\ldots)\Big|+\\
&\Big|\P(X_{t_J+j+n} \in  S_{t_J+j+n},\ldots|X_{t_1}\in S_{t_1},\ldots,X_{t_J}\in S_{t_J})-\P(X_{t_J+j+n}\in S_{t_J+j+n},\ldots)\Big|\\
\lesssim& 2\exp(-Cn)\lesssim \exp(-Cn),
\end{align*}
we conclude that
\begin{align*}
&\sup_{j\geq1}\Big|\P(X_{t_J+j+n}\in S_{t_J+j+n},\ldots|X_{t_1}\in S_{t_1},\ldots,X_{t_J}\in S_{t_J},X_{t_J+1}\in S_{t_J+1},\ldots,\\
&~~~~~~~~~X_{t_J+j} \in  S_{t_J+j})-\P(X_{t_J+j+n}\in S_{t_J+j+n},\ldots|X_{t_1}\in S_{t_1},\ldots,X_{t_J}\in S_{t_J})\Big|\\
\lesssim& \exp(-Cn).
\end{align*}
This proves that, uniformly, for any integer $J$ such that $1\leq J\leq r-1$ and arbitrary $1\leq t_1<\ldots<t_J\leq T$, conditional on $X_{t_1},\ldots,X_{t_J}$, the sequence $\{X_t\}_{t=t_J+1}^{\infty}$ satisfies the $\phi$-mixing condition in \eqref{eq:phi} uniformly. 
\end{proof}

\section{Proof of Proposition \ref{prop:new1}}
It is obvious that proving Proposition \ref{prop:new1} is equivalent to proving the following statement. 
\begin{lemma}
Suppose $X,Y,Z$ are three random vectors defined on the probability space $(\Omega_1,\cF_1,\P_1)$, $(\Omega_2,\cF_2,\P_2)$, and $(\Omega_3,\cF_3,\P_3)$, with each mapped to a vector-valued space $(\reals^{k_1},\cB(\reals^{k_1}))$, $(\reals^{k_2},\cB(\reals^{k_2}))$, and $(\reals^{k_3},\cB(\reals^{k_3}))$. Here $k_1\in\N$, $k_2,k_3\in\N\cup\{\infty\}$, and $\cB(\reals^{k_i})$ is the Borel algebra of $\reals^{k_i}$. Define $\mu_{123}$ to be the Lebesgue measure on $\reals^{k_1}\times\reals^{k_2}\times\reals^{k_3}$. Similarly, define $\mu_{12},\mu_{13},\mu_{23},\mu_{1},\mu_{2},\mu_{3}$ to be the Lebesgue measures on $\reals^{k_1}\times\reals^{k_2},\reals^{k_1}\times\reals^{k_3},\reals^{k_2}\times\reals^{k_3},\reals^{k_1},\reals^{k_2},\reals^{k_3}$. Suppose the following two properties hold:
\begin{itemize}
\item
$(X,Y,Z)$ has a joint density function $f_{123}(w_1,w_2,w_3): \reals^{k_1}\times\reals^{k_2}\times\reals^{k_3}\rightarrow \reals$; 
\item
For any sets $A_1\in\cB(\reals^{k_1}), A_2\in\cB(\reals^{k_2})$ with $\int_{A_1\times A_2}f_{12}(w_1,w_2)d\mu_{12}> 0$, and arbitrary set $A_3\in\cB(\reals^{k_3})$, 
\begin{align}\label{eq:cont1}
\Big| \frac{\int_{A_1\times A_2\times A_3}f_{123}(w_1,w_2,w_3)d\mu_{123}}{\int_{A_1\times A_2}f_{12}(w_1,w_2)d\mu_{12}}- \int_{A_3}f_3(w_3)d\mu_3\Big| \leq \zeta,
\end{align}
where $f_{12}, f_{13}, f_{23}, f_1, f_2, f_3$ are the density functions corresponding to the random vectors $(X,Y), (X,Z), (Y,Z), X, Y, Z$, and $\zeta\geq 0$ is a fixed constant.
\end{itemize}
We then have, for any $w_1^0\in \cW_1^0$, any $A_2\in\cB(\reals^{k_2})$ with $\int_{A_2}f_{12}(w_1^0,w_2)d\mu_2> 0$, and any $A_3\in\cB(\reals^{k_3})$,
\[
\Big| \frac{\int_{A_2\times A_3}f_{123}(w_1^0, w_2,w_3)d\mu_{23}}{\int_{A_2}f_{12}(w_1^0,w_2)d\mu_2}- \int_{A_3}\frac{f_{13}(w_1^0,w_3)}{f_1(w_1^0)}d\mu_3\Big| \leq 2\zeta,
\]
where $\cW_1^0$ belongs to $\{w_1\in\reals^{k_1}: f_1(w_1)\ne0\}$ and satisfies $\P_1(\cW_1^0)=1$.
\end{lemma}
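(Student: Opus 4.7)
The plan is to reduce the conditional mixing bound in the lemma to the hypothesis \eqref{eq:cont1} by Lebesgue differentiation in the $w_1$-variable. First, I would symmetrize the hypothesis so that the unconditional $\int_{A_3}f_3\,d\mu_3$ on its right-hand side is replaced by the $A_1$-conditioned quantity $\int_{A_1 \times A_3}f_{13}\,d\mu_{13}/\int_{A_1}f_1\,d\mu_1$. Applying \eqref{eq:cont1} both to $(A_1,A_2,A_3)$ and to $(A_1,\reals^{k_2},A_3)$ (the latter admissible whenever $\int_{A_1}f_1\,d\mu_1>0$), the triangle inequality yields
\[
\Bigl|\frac{\int_{A_1 \times A_2 \times A_3} f_{123}\,d\mu_{123}}{\int_{A_1 \times A_2} f_{12}\,d\mu_{12}} - \frac{\int_{A_1 \times A_3} f_{13}\,d\mu_{13}}{\int_{A_1} f_1\,d\mu_1}\Bigr| \le 2\zeta.
\]
Cross-multiplying to clear the denominators produces a single inequality whose four factors are each integrals of the form $\int_{A_1}\phi\,d\mu_1$.

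Next, I would shrink $A_1$ to the point $w_1^0$. Taking $A_1=B(w_1^0,\epsilon)$, dividing both sides of the cross-multiplied inequality by $\mu_1(B(w_1^0,\epsilon))^2$, and letting $\epsilon\downarrow 0$, I would invoke the Lebesgue differentiation theorem applied to the four $L^1(\mu_1)$ functions
\[
w_1\mapsto\textstyle\int_{A_2 \times A_3} f_{123}(w_1,w_2,w_3)\,d\mu_{23},\ w_1\mapsto f_1(w_1),\ w_1\mapsto\textstyle\int_{A_3} f_{13}(w_1,w_3)\,d\mu_3,\ w_1\mapsto\textstyle\int_{A_2} f_{12}(w_1,w_2)\,d\mu_2,
\]
which lie in $L^1(\mu_1)$ by Fubini because their total integrals are all probabilities. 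At every point $w_1^0$ that is a simultaneous Lebesgue point of all four (hence off a $\mu_1$-null set which, at this stage, depends on $A_2$ and $A_3$), dividing through by the positive quantity $f_1(w_1^0)\int_{A_2}f_{12}(w_1^0,\cdot)\,d\mu_2$ produces exactly the bound asserted in the lemma.

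The main obstacle lies in the measurability bookkeeping: the null set from the differentiation step depends on $(A_2,A_3)$, whereas the conclusion requires a single $\cW_1^0$ of full $\P_1$-measure on which the bound holds simultaneously for all Borel $A_2,A_3$. My plan is to fix a countable $\pi$-system $\mathcal{C}$ generating $\cB(\reals^{k_2})\otimes\cB(\reals^{k_3})$ (e.g., finite unions of rectangles with rational endpoints), take $N$ to be the union of the countably many exceptional null sets indexed by pairs from $\mathcal{C}$, and define $\cW_1^0:=\{w_1:f_1(w_1)>0\}\setminus N$, which has $\P_1$-measure one. For fixed $w_1^0\in\cW_1^0$ and $A_2\in\mathcal{C}$ I would extend the bound from $A_3\in\mathcal{C}$ to all Borel $A_3$ by approximation: the set functions $A_3\mapsto\int_{A_2\times A_3}f_{123}(w_1^0,\cdot,\cdot)\,d\mu_{23}$ and $A_3\mapsto\int_{A_3}f_{13}(w_1^0,\cdot)\,d\mu_3$ are finite Radon measures (absolutely continuous with respect to $\mu_3$), so any Borel $A_3$ is approximable in symmetric-difference by members of $\mathcal{C}$. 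A symmetric density argument in the $A_2$-slot, using continuity of the right-hand side $2\zeta\cdot f_1(w_1^0)\int_{A_2}f_{12}(w_1^0,\cdot)\,d\mu_2$ in its $A_2$-measure factor, extends the bound to all Borel $A_2$ and completes the proof.
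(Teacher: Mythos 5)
Your proposal is correct and follows essentially the same route as the paper: both apply the hypothesis \eqref{eq:cont1} twice (once with $A_2$ and once with $\reals^{k_2}$ in the middle slot), combine the two via the triangle inequality, and localize at $w_1^0$ through a Lebesgue-differentiation argument over shrinking sets $A_1$. The only substantive difference is that you explicitly address the dependence of the exceptional $\mu_1$-null set on $(A_2,A_3)$ via a countable generating class and a symmetric-difference approximation, a bookkeeping step the paper's proof leaves implicit.
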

\begin{proof}
For notation simplicity, we write $f_{123}(\cdot)$ as $f(\cdot)$ in the sequel. By the definition of the density functions, for proving the lemma, it is equivalent to showing
\begin{align*}
\sup_{w_1^0\in\cW_1^0}\Big|\frac{\int_{A_2\times A_3}f(w_1^0,w_2,w_3)d\mu_{23}}{\int_{A_2\times \reals^{k_3}}f(w_1^0,w_2,w_3)d\mu_{23}}-\frac{\int_{\reals^{k_2}\times A_3}f(w_1^0,w_2,w_3)d\mu_{23}}{\int_{\reals^{k_2}\times \reals^{k_3}}f(w_1^0,w_2,w_3)d\mu_{23}}\Big| \leq 2\zeta.
\end{align*}
We further have, for each $w_1^0$,
\begin{align*}
&\Big|\frac{\int_{A_2\times A_3}f(w_1^0,w_2,w_3)d\mu_{23}}{\int_{A_2\times \reals^{k_3}}f(w_1^0,w_2,w_3)d\mu_{23}}-\frac{\int_{\reals^{k_2}\times A_3}f(w_1^0,w_2,w_3)d\mu_{23}}{\int_{\reals^{k_2}\times \reals^{k_3}}f(w_1^0,w_2,w_3)d\mu_{23}}\Big| \\
=&\Big|\underbrace{\Big[\frac{\int_{A_2\times A_3}f(w_1^0,w_2,w_3)d\mu_{23}}{\int_{A_2\times \reals^{k_3}}f(w_1^0,w_2,w_3)d\mu_{23}}-\int_{A_3}f_3(w_3)d\mu_3\Big]}_{D_1}-\underbrace{\Big[\frac{\int_{\reals^{k_2}\times A_3}f(w_1^0,w_2,w_3)d\mu_{23}}{\int_{\reals^{k_2}\times \reals^{k_3}}f(w_1^0,w_2,w_3)d\mu_{23}}-\int_{A_3}f_3(w_3)d\mu_3\Big]}_{D_2}\Big|.
\end{align*}
Now we turn to bound $D_1$ and $D_2$ respectively. For $D_1$, we aim to show
\begin{align*}
D_1\leq \zeta.
\end{align*}
Let us define, for fixed $w_1\in\reals^{k_1}$,
\begin{align*}
F_1(w_1):=\int_{A_2\times A_3}f(w_1,w_2,w_3)d\mu_{23}~~~{\rm and}~~~F_2(w_1):=\int_{A_2\times \reals^{k_3}}f(w_1,w_2,w_3)d\mu_{23}.
\end{align*}
According to \eqref{eq:cont1} and the triangle inequality
\begin{align*}
D_1\leq\Big|\frac{\int_{A_1}F_1(w_1)d\mu_1}{\int_{A_1}F_2(w_1)d\mu_1}-\frac{F_1(w_1^0)}{F_2(w_1^0)}\Big|+\Big|\frac{\int_{A_1}F_1(w_1)d\mu_1}{\int_{A_1}F_2(w_1)d\mu_1}-\int_{A_3} f(w_3)d\mu_3\Big|,
\end{align*}
it suffices to show for any $\epsilon>0$, there exists a subset $A_1(\epsilon;w_1^0)\in\cB(\reals^{k_1})$ such that 
\begin{align}\label{eq:cont2}
\Big|\frac{\int_{A_1(\epsilon;w_1^0)}F_1(w_1)d\mu_1}{\int_{A_1(\epsilon;w_1^0)}F_2(w_1)d\mu_1}-\frac{F_1(w_1^0)}{F_2(w_1^0)}\Big|\leq \epsilon
\end{align}
and $\int_{A_1(\epsilon;w_1^0)}F_2(w_1)d\mu_1>0$. 
%For this, since $F_i(w_1)$ is continuous for $i=1,2$, we can find a positive constant $\delta(\epsilon_1,\epsilon_2)$ for any $\epsilon_1,\epsilon_2>0$ such that $|F_1(w_1)-F_1(w_1^0)|<\epsilon_1$ and $|F_2(w_1)-F_2(w_1^0)|<\epsilon_2$ both hold whenever $\rho_1(w_1,w_1^0)<\delta(\epsilon_1,\epsilon_2)$. %Furthermore, by Property {\bf (P1)}, $\delta(\epsilon_1,\epsilon_2)$ does not depend on the choice of $w_1^0$. 
%Therefore, choose $A_{1,\epsilon_1,\epsilon_2; w_1^0}:=\{w_1:\rho_1(w_1,w_1^0)<\delta(\epsilon_1,\epsilon_2)\}$ and we deduce
Using Theorem 2.3.8 and Corollary 2.2.2(b) in \cite{ash2000probability}, for $w_1^0$ over a set of probability one (up to $\P_1$), for arbitrary $\epsilon>0$ (not depending on $w_1^0$), we could pick a set $A\in\cB(\reals^{k_1})$ surrounding $w_1^0$ such that
\[
\Big|\frac{\int_{A}F_1(w_1)d\mu_1}{\mu_1(A)}-F_1(w_1^0)\Big|\leq \epsilon.
\]
A similar bound applies to $F_2(w_1^0)$. Accordingly, for $w_1^0$ over a set of probability one (up to $\P_1$), for arbitrary $\epsilon_1,\epsilon_2>0$ (not depending on $w_1^0$), we could pick $A_{1,\epsilon_1,\epsilon_2; w_1^0}$ surrounding $w_1^0$ such that
\begin{align*}
\frac{F_1(w_1^0)-\epsilon_1}{F_2(w_1^0)+\epsilon_2}&\leq \frac{\int_{A_{1,\epsilon_1,\epsilon_2; w_1^0}}F_1(w_1)d\mu_1\Big/\mu_1(A_{1,\epsilon_1,\epsilon_2;w_1^0})}{\int_{A_{1,\epsilon_1,\epsilon_2; w_1^0}}F_2(w_1)d\mu_1\Big/\mu_1(A_{1,\epsilon_1,\epsilon_2;w_1^0})}=\frac{\int_{A_{1,\epsilon_1,\epsilon_2; w_1^0}}F_1(w_1)d\mu_1}{\int_{A_{1,\epsilon_1,\epsilon_2; w_1^0}}F_2(w_1)d\mu_1}\leq \frac{F_1(w_1^0)+\epsilon_1}{F_2(w_1^0)-\epsilon_2}.
\end{align*}
For a suitable choice of $(\epsilon_1,\epsilon_2)$, we can have
\begin{align*}
\frac{F_1(w_1^0)}{F_2(w_1^0)}-\epsilon\leq \frac{F_1(w_1^0)-\epsilon_1}{F_2(w_1^0)+\epsilon_2}<\frac{F_1(w_1^0)+\epsilon_1}{F_2(w_1^0)-\epsilon_2}\leq \frac{F_1(w_1^0)}{F_2(w_1^0)}+\epsilon
\end{align*}
%Using the condition \eqref{eq:cont1} and 
and $\int_{A_{1,\epsilon_1,\epsilon_2,w_1^0}}F_2(w_1)d\mu_1>0$. Setting $A_1(\epsilon;w_1^0)$ in \eqref{eq:cont2} to be $A_{1,\epsilon_1,\epsilon_2; w_1^0}$ , we have \eqref{eq:cont2} holds,  and hence by \eqref{eq:cont1},
\begin{align*}
D_1\leq \zeta+\epsilon.
\end{align*}
Since $\epsilon$ here is any positive constant, we obtain the desirable result for $D_1$. Similarly, we have the same bound for $D_2$. According to the triangle inequality $|D_1-D_2|\leq |D_1|+|D_2|$, we have $|D_1-D_2|\leq 2\zeta$, and thus complete the proof.
\end{proof}

\section{Auxiliary lemmas}

%The proof of the bias term in Theorem \ref{thm:bias} depends on the following fundamental coupling lemma introduced in \cite{berbee1979random}. Its proof can be found in \cite{doukhan1994mixing}.

%\begin{lemma}[Coupling Lemma] \label{lem:coupling} Let $X$ and $Y$ be two random variables defined on a probability space $(\Omega, \cF, \P)$ and taking their values in Borel spaces $B_1$ and $B_2$ respectively. Assume that $(\Omega, \cF, \P)$ is rich enough to contain a random variable $\Delta$ with uniform distribution over $[0,1]$ independent of $(X,Y)$. Then there exists a random variable $Y^*$ such that $Y^*=f(X,Y,\Delta)$ where $f$ is a measurable function from $B_1\times B_2\times [0,1]$ into $B_2$ such that $Y^*$ is independent of $X$, has the same distribution as $Y$, and satisfies
%\[
%\P(Y\ne Y^*) =\beta(\sigma(X), \sigma(Y)),
%\]
%where $\sigma(X)$ and $\sigma(Y)$ are the sigma fields generated by $X$ and $Y$ respectively. 
%\end{lemma}

%\fbox{stop point}
The following lemma, regarded as a strengthened version of the Jensen's inequality, comes from \cite{merlevede2009bernstein}.

\begin{lemma}\label{lem:combn}
Let $Z_1,Z_2,\ldots$ be a sequence of real valued random variables. Assume that there exist positive constants $\sigma_1,\sigma_2,\ldots$ and $\kappa_1,\kappa_2,\ldots$ such that, for any $i\geq1$ and any $\eta$ in $[0,1/\kappa_i)$,
\begin{align*}
\log\E[\exp(\eta Z_i)]\leq(\sigma_i\eta)^2/(1-\kappa_i\eta).
\end{align*}
Then, for any positive $n$ and any $\eta$ in $[0,1/(\kappa_1+\ldots+\kappa_n))$, we have
\begin{align*}
\log\E[\exp(\eta(Z_1+\ldots+Z_n))]\leq(\sigma \eta)^2/(1-\kappa \eta),
\end{align*}
where $\sigma=\sigma_1+\ldots+\sigma_n$ and $\kappa=\kappa_1+\ldots+\kappa_n$.
\end{lemma}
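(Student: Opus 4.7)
The plan is to apply Hölder's inequality with a carefully optimized set of conjugate exponents. For any positive numbers $p_1,\ldots,p_n$ satisfying $\sum_{i=1}^n 1/p_i = 1$, Hölder's inequality gives
\[
\log\E[\exp(\eta(Z_1+\cdots+Z_n))] \leq \sum_{i=1}^n \frac{1}{p_i}\log\E[\exp(\eta p_i Z_i)],
\]
so provided $\eta p_i<1/\kappa_i$ for every $i$, the per-component hypothesis yields
\[
\log\E[\exp(\eta(Z_1+\cdots+Z_n))] \leq \sum_{i=1}^n \frac{p_i\,\sigma_i^2\,\eta^2}{1-\kappa_i p_i \eta}.
\]
The problem is thus reduced to choosing the weights $p_i$ so that the right-hand side equals $(\sigma\eta)^2/(1-\kappa\eta)$.

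My next step is to identify the optimal weights, either via Lagrange multipliers or by verifying the following ansatz directly. Set $\mu:=\sigma/(1-\kappa\eta)$ and
\[
p_i := \frac{\mu}{\sigma_i+\kappa_i\mu\eta}, \qquad i=1,\ldots,n.
\]
The constraint $\sum_i 1/p_i = (\sigma+\kappa\mu\eta)/\mu = 1$ is equivalent to $\mu(1-\kappa\eta)=\sigma$, which is precisely the definition of $\mu$. The admissibility conditions are easy to verify: since $\sigma_i+\kappa_i\mu\eta\leq \sigma+\kappa\mu\eta=\mu$, one has $p_i\geq 1$; moreover $\eta p_i<1/\kappa_i$ reduces to $\sigma_i>0$, and indices with $\sigma_i=0$ may be discarded since they contribute trivially to the bound.

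Finally, a direct computation shows $1-\kappa_i p_i\eta = \sigma_i/(\sigma_i+\kappa_i\mu\eta)$, hence $p_i\sigma_i^2/(1-\kappa_i p_i\eta)=\mu\sigma_i$, and summing over $i$ gives
\[
\sum_{i=1}^n \frac{p_i\,\sigma_i^2\,\eta^2}{1-\kappa_i p_i\eta} = \mu\sigma\eta^2 = \frac{\sigma^2\eta^2}{1-\kappa\eta},
\]
which is exactly the claimed bound. The hard part is locating the correct Hölder weights: natural choices such as $p_i=\kappa/\kappa_i$ (equalizing denominators) or $p_i=\sigma/\sigma_i$ (equalizing numerators) each produce bounds that are off by a Cauchy–Schwarz gap in the wrong direction, and only the Lagrange-optimal choice $p_i\propto\mu/(\sigma_i+\kappa_i\mu\eta)$ simultaneously matches the numerator $\sigma^2$ and the denominator $1-\kappa\eta$ of the target. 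Once this choice is guessed, the rest is routine algebra.
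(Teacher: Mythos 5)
Your proof is correct: with $\mu=\sigma/(1-\kappa\eta)$ the weights $p_i=\mu/(\sigma_i+\kappa_i\mu\eta)$ indeed satisfy $\sum_i 1/p_i=1$, $p_i\ge 1$, and $\eta p_i<1/\kappa_i$ (using $\sigma_i>0$, which the lemma assumes), and the identity $p_i\sigma_i^2/(1-\kappa_i p_i\eta)=\mu\sigma_i$ makes the Hölder bound sum exactly to $(\sigma\eta)^2/(1-\kappa\eta)$. The paper does not prove this lemma but quotes it from \cite{merlevede2009bernstein}, and your optimized-exponent Hölder argument is essentially the proof given there, so nothing further is needed.
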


The following lemma gives a rigorous proof of a fundamental conditional independence property. Although this statement must have been shown in other places, we fail to locate one, and decide to provide a proof by ourselves. 

\begin{lemma}\label{lem:condind}
Suppose we have three random variables $X$, $Y$, and $Z$ mapping to $(\cW_1,\cB_{\cW_1}), (\cW_2,\cB_{\cW_2}),$ and $(\cW_3,\cB_{\cW_3})$ respectively. If $X$ and $(Y,Z)$ are independent, we have $X$ and $Y$ are independent conditional on $Z$.
\end{lemma}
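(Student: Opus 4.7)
The plan is to verify the defining property of conditional independence directly: for every $A \in \cB_{\cW_1}$ and $B \in \cB_{\cW_2}$, I need to show
\[
\P(X \in A, Y \in B \mid Z) = \P(X \in A \mid Z)\,\P(Y \in B \mid Z) \quad \text{a.s.}
\]
My strategy is to show that the first conditional expectation on the right collapses to an unconditional probability, and then verify the factorization by checking the defining integral identity of conditional expectation against sets of the form $\{Z \in C\}$ with $C \in \cB_{\cW_3}$.

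The first step establishes that $X$ and $Z$ are marginally independent, which is immediate by taking $B = \cW_2$ in the hypothesis $X \perp (Y,Z)$. Consequently $\P(X \in A \mid Z) = \P(X \in A)$ a.s., since the constant $\P(X \in A)$ is trivially $\sigma(Z)$-measurable and, for every $C \in \cB_{\cW_3}$,
\[
\int_{\{Z\in C\}}\P(X \in A)\,d\P = \P(X \in A)\,\P(Z \in C) = \P(X \in A, Z \in C)
\]
by marginal independence. So the identity to be proved reduces to $\P(X \in A, Y \in B \mid Z) = \P(X \in A)\,\P(Y \in B \mid Z)$ a.s.

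The main step is then to verify that the candidate $\P(X \in A)\,\P(Y \in B \mid Z)$ is a valid version of the conditional expectation $\P(X \in A, Y \in B \mid Z)$. It is $\sigma(Z)$-measurable as the product of a constant and a $\sigma(Z)$-measurable random variable. For arbitrary $C \in \cB_{\cW_3}$,
\[
\int_{\{Z\in C\}} \P(X \in A)\,\P(Y \in B \mid Z)\,d\P = \P(X \in A)\,\P(Y \in B, Z \in C),
\]
which by the joint hypothesis $X \perp (Y,Z)$ equals $\P(X \in A, Y \in B, Z \in C) = \E[\mathbf{1}_{\{X\in A,\,Y\in B\}}\mathbf{1}_{\{Z \in C\}}]$. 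Hence the defining relation of conditional expectation is satisfied and the factorization holds almost surely. There is essentially no substantive obstacle beyond careful bookkeeping of measurability; this is the standard ``coarsening'' argument from elementary probability translated to the general measurable setting needed by the paper.
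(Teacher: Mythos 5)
Your proof is correct, and it takes a cleaner route than the paper's. The paper first establishes $\E[\mathbf{1}(X\in A)\mid\sigma(Y,Z)]=\E[\mathbf{1}(X\in A)]=\E[\mathbf{1}(X\in A)\mid\sigma(Z)]$, then verifies the product formula by integrating $\mathbf{1}(X\in A)$ over sets $G\cap Y^{-1}(B)$ with $G\in\sigma(Z)$, passing through the tower property and ``taking out what is known'' to pull the $\sigma(Z)$-measurable factor outside an inner conditional expectation. You instead never condition on the larger sigma-field $\sigma(Y,Z)$: you guess the candidate version $\P(X\in A)\,\P(Y\in B\mid Z)$ outright and verify the defining integral identity directly on sets $\{Z\in C\}$, with the hypothesis $X\perp(Y,Z)$ entering only once, as the scalar identity $\P(X\in A)\,\P(Y\in B,Z\in C)=\P(X\in A,Y\in B,Z\in C)$. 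Since $\sigma(Z)$ consists exactly of the preimages $Z^{-1}(C)$, checking on such sets does exhaust $\sigma(Z)$, so there is no gap there. What your approach buys is economy --- no tower property, no appeal to measurability of products of conditional expectations, and the reduction $\P(X\in A\mid Z)=\P(X\in A)$ is isolated as a separate, trivial first step; what the paper's approach buys is that it exhibits the mechanism (projection from $\sigma(Y,Z)$ down to $\sigma(Z)$) that generalizes to statements where the conditional expectation given the larger field is not constant. Both are complete proofs of the lemma as stated.
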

%\fbox{revise the proof}
\begin{proof}
Define the sigma field generated by $Z$ to be $\sigma(Z)$, and $\mathbf{1}(\cdot)$ to be the indicator function. Similarly, define the sigma field generated by $Y$ and $Z$ to be $\sigma(Y,Z)$. %First, since $X$ and $Y$ are independent, and $X$ and $Z$ are independent, we have $X$ and $(Y,Z)$ are independent \citep{billingsley2008probability}. Therefore, 
We have, for any set $A\in\cB_{\cW_1}$,
\begin{align}\label{eq:cond1}
\E[\mathbf{1}(X\in A)|\sigma(Y,Z)]=\E[\mathbf{1}(X\in A)]=\E[\mathbf{1}(X\in A)|\sigma(Z)],~~{\rm a.s.}.
\end{align}
It suffices to show that, for any sets $A\in\cB_{\cW_1}$, $B\in\cB_{\cW_2}$, we have
\begin{align}\label{eq:cond0}
\E[\mathbf{1}(X\in A)\mathbf{1}(Y\in B)|\sigma(Z)]=\E[\mathbf{1}(X\in A)|\sigma(Z)]\cdot\E[\mathbf{1}(Y\in B)|\sigma(Z)],~~{\rm a.s.}.
\end{align}
By the definition of conditional expectation and \eqref{eq:cond1}, we have for any set $G\in\sigma(Z)$,
\begin{align*}
\int_{G\cap Y^{-1}(B)}\mathbf{1}(X\in A)d\P&=\int_{G\cap Y^{-1}(B)}\E[\mathbf{1}(X\in A)|\sigma(Y,Z)]d\P=\int_{G\cap Y^{-1}(B)}\E[\mathbf{1}(X\in A)|\sigma(Z)]d\P.
\end{align*}
Using the definition of conditional expectation again, we have
\begin{align}\label{eq:cond2}
\int_{G\cap Y^{-1}(B)}\mathbf{1}(X\in A)d\P&=\int_G \E[\mathbf{1}(X\in A)|\sigma(Z)]\mathbf{1}(Y\in B)d\P\notag\\
&=\int_G \E\Big[\E[\mathbf{1}(X\in A)|\sigma(Z)]\mathbf{1}(Y\in B)\Big|\sigma(Z)\Big]d\P. 
\end{align}
Since $\E[\mathbf{1}(X\in A)|\sigma(Z)]$ is measurable with respect to $\sigma(Z)$, we have
\begin{align}\label{eq:cond3}
\E\Big[\E[\mathbf{1}(X\in A)|\sigma(Z)]\mathbf{1}(Y\in B)\Big|\sigma(Z)\Big]=\E[\mathbf{1}(X\in A)|\sigma(Z)]\E[\mathbf{1}(Y\in B)|\sigma(Z)],~~{\rm a.s.}.
\end{align}
Combining \eqref{eq:cond2} and \eqref{eq:cond3}, we further have
\begin{align*}
\int_{G\cap Y^{-1}(B)}\mathbf{1}(X\in A)d\P=\int_G \E[\mathbf{1}(X\in A)|\sigma(Z)]\E[\mathbf{1}(Y\in B)|\sigma(Z)]d\P,
\end{align*}
which indicates that, for any set $G\in\sigma(Z)$,
\begin{align*}
\int_G\mathbf{1}(X\in A)\mathbf{1}(Y\in B)d\P=\int_G \E[\mathbf{1}(X\in A)|\sigma(Z)]\E[\mathbf{1}(Y\in B)|\sigma(Z)]d\P.
\end{align*}
Combining the above equation and the fact that $\E[\mathbf{1}(X\in A)|\sigma(Z)]\E[\mathbf{1}(Y\in B)|\sigma(Z)]$ is measurable with respect to $\sigma(Z)$, 
we succeed in proving \eqref{eq:cond0}, and thus complete the proof.
\end{proof}

\section*{Acknowledgement}

The author is grateful for the helpful discussions with Drs. Han Liu and Jianqing Fan, which motivated this work. The author thanks Sheng Xu for discussions and the proof of Lemma \ref{lem:condind}. %The author is also very grateful to an anonymous referee for her/his report and suggestions, which substantially improved the quality of this work.

\bibliographystyle{apalike}
\bibliography{mix}

\end{document}